\documentclass{article}
\usepackage{graphicx} 
\usepackage{silence}
\usepackage{comment}
\usepackage{enumitem}
\usepackage[T1]{fontenc}
\usepackage[utf8]{inputenc} 

\usepackage{amsmath,amsthm,amssymb,mathtools}
\usepackage{bm}
\usepackage{bbm}
\usepackage{esint}

\usepackage{xcolor}
\usepackage{geometry}

\usepackage{hyphenat}

\usepackage{hyperref} 
\usepackage[nameinlink,capitalise]{cleveref}

\usepackage{microtype}

\allowdisplaybreaks

\allowdisplaybreaks
\mathtoolsset{showonlyrefs}

\newcommand{\ee}{e}
\newcommand{\D}{\nabla}

\newcommand{\norm}[1]{\left\lVert#1\right\rVert}

\newtheorem{Thm}{Theorem}[section]
\newtheorem{Lem}[Thm]{Lemma}
\newtheorem{Rem}[Thm]{Remark}

\newtheorem{Def}[Thm]{Definition}
\newtheorem{Prop}[Thm]{Proposition}

\def\Om{\Omega}

\def\R{\mathbb R}

\def\N{\mathbb N}
\def\e{\varepsilon}

\def\Xint#1{\mathchoice
   {\XXint\displaystyle\textstyle{#1}}
   {\XXint\textstyle\scriptstyle{#1}}
   {\XXint\scriptstyle\scriptscriptstyle{#1}}
   {\XXint\scriptscriptstyle\scriptscriptstyle{#1}}
   \!\int}
\def\XXint#1#2#3{{\setbox0=\hbox{$#1{#2#3}{\int}$}
     \vcenter{\hbox{$#2#3$}}\kern-.5\wd0}}

\def\dashint{\Xint-}
\newcommand{\prodscal}[2]{\langle{#1},{#2}\rangle}

\title{Morrey estimates for the gradient\\ in non-linear variational transmission problems}
\author{Luca Esposito\\
 {\it Dipartimento di Matematica, 
University of Salerno}\\ {\it Via Giovanni Paolo II,
84084 Fisciano (SA),  Italy} \\{\it  e-mail: luesposi@unisa.it }
 \bigskip
 \\
  Lorenzo Lamberti\\
 {\it Institute \'Elie Cartan, Université de Lorraine, CNRS}\\ {\it F-54000 Nancy, France} \\{\it  e-mail: lorenzo.lamberti@univ-lorraine.fr}}
\date{November 2025}

\begin{document}

\maketitle

\begin{abstract}
We study a class of variational transmission problems driven by nonlinear energies
with discontinuous coefficients across a prescribed interface.
The model setting consists of integral functionals of the form
\[
\mathcal{F}(u;E)=\int_{\Omega}\sigma_E(x)\,F(\nabla u)\,dx,
\]
where the coefficient $\sigma_E$ takes two constant values on complementary regions
separated by a $C^1$ hypersurface, and the integrand $F$ satisfies standard
$p$-growth and monotonicity conditions with $p>2$.

In this nonlinear variational framework, we establish local Morrey-space
regularity for the gradient of local minimizers, proving that
$\nabla u\in L^{2,\lambda}_{\mathrm{loc}}(\Omega)$ for every $0\leq\lambda<n$,
provided $2<p<\frac{2n}{n-2}$.
The proof is based on quantitative decay estimates for the energy near the interface,
first obtained in a flat configuration and then extended to the general case by
a suitable approximation argument.
\end{abstract}
\section{Introduction}

Transmission problems arise naturally in the study of diffusive processes in
heterogeneous media, where the governing laws change abruptly across fixed interfaces.
Typical applications include elasticity theory, composite materials, and conductivity
phenomena. This kind of problems appear also in the study of thermal insulation of bodies under sources and prescribed boundary conditions (see for example \cite{AC,DPNST})
From a mathematical perspective, such problems are characterised by the presence of
piecewise-defined operators or energy densities, together with suitable transmission
conditions prescribed along the separating interface.
The first systematic investigation of transmission problems in elasticity theory dates
back to the seminal work of M.~Picone in the 1950s (see \cite{Pico}).

In this paper, we investigate a class of \emph{variational transmission problems}
associated with nonlinear integral functionals exhibiting discontinuous coefficients
across a prescribed interface.
Our focus is on the regularity properties of local minimizers, with particular attention
to the behaviour of the gradient near the interface.

Let $\Omega \subset \mathbb{R}^n$ be a bounded domain and let $E \subset \Omega$ be
a measurable subset.
We assume that the interface $\partial E$ is a hypersurface of class $C^1$.
We fix two constants $0 < \alpha < \beta < +\infty$ and define
\[
\sigma_E := \beta \mathbbm{1}_E + \alpha \mathbbm{1}_{E^c},
\]
where $\mathbbm{1}_E$ denotes the characteristic function of the set $E$ and $E^c$ is the complementary of $E$.
We consider the integral functional
\begin{equation}\label{MainF}
\mathcal{F}(u;E) := \int_{\Omega} \sigma_E(x)\, F(\nabla u)\, dx,
\end{equation}
defined for functions $u \in W^{1,p}(\Omega)$.

The integrand $F \colon \mathbb{R}^n \to \mathbb{R}$ is assumed to belong to
$C^2(\mathbb{R}^n)$ and to satisfy standard $p$-growth and monotonicity conditions:
for all $\xi, \eta \in \mathbb{R}^n$,
\begin{equation}
\label{Monotonicity}\tag{H1}
\langle \nabla F(\xi) - \nabla F(\eta), \xi - \eta \rangle
\geq \nu \big( \mu^2 + |\xi|^2 + |\eta|^2 \big)^{\frac{p-2}{2}} |\xi - \eta|^2,
\end{equation}
\begin{equation}
\label{Growth}\tag{H2}
|\nabla F(\xi) - \nabla F(\eta)|
\leq L \big( \mu^2 + |\xi|^2 + |\eta|^2 \big)^{\frac{p-2}{2}} |\xi - \eta|,
\end{equation}
for some constants $\nu, L > 0$, $\mu \in (0,1]$, and $p > 2$.

Local minimizers of the functional $\mathcal{F}$ are understood in the usual
variational sense.

\begin{Def}
Let $u \in W^{1,p}(\Omega)$.
We say that $u$ is a \emph{local minimizer} of $\mathcal{F}(\cdot;\Omega)$ in $\Omega$
if, for every ball $B_r(x_0) \subset \Omega$ and every
$\phi \in W^{1,p}_0(B_r(x_0))$, one has
\[
\mathcal{F}(u;B_r(x_0)) \leq \mathcal{F}(u + \phi;B_r(x_0)).
\]
\end{Def}

Before stating our main result, let us place it in the context of the existing literature.
In the linear case $p = 2$, transmission problems with piecewise constant coefficients
have been extensively studied, and gradient regularity across sufficiently smooth
interfaces is by now classical (see, e.g.,~\cite{GT,LV,Grisvard}).
Such results follow from the elliptic regularity theory for divergence-form operators
with discontinuous coefficients (see for instance
\cite{Evans, GT, LM, Grisvard} and \cite[Theorem~7.53]{AFP}.
In this setting, local $W^{1,2}$- and H\"older-type regularity for the gradient of minimizers is well
understood under mild geometric assumptions on the interface.

More recently, fine regularity properties for linear transmission problems across
$C^{1,\alpha}$ interfaces have been obtained by Caffarelli, Soria-Carro, and Stinga
\cite{CSS}.

By contrast, the nonlinear case $p \neq 2$ remains far less developed.
To the best of our knowledge, the only available regularity results for nonlinear
transmission problems concern equations subject to \emph{additive} transmission
conditions on the interface.
In particular, in the recent work \cite{BPU}, BMO regularity of the gradient is obtained
for degenerate quasilinear equations under transmission conditions of the form
\[
g(|\nabla u_1|)\frac{\partial u_1}{\partial \nu}
-
g(|\nabla u_2|)\frac{\partial u_2}{\partial \nu}
= f \quad \text{on } \partial E.
\]

In contrast, the transmission condition naturally associated with the variational
functional \eqref{MainF} is of \emph{multiplicative} type, namely
\[
g_1(|\nabla u_1|)\frac{\partial u_1}{\partial \nu}
=
g_2(|\nabla u_2|)\frac{\partial u_2}{\partial \nu}
\quad \text{on } \partial E,
\]
which arises intrinsically from the Euler--Lagrange equations and expresses the
continuity of the nonlinear flux across the interface. We stress that this multiplicative transmission condition is intrinsic to the
variational structure of the functional \eqref{MainF}, as it arises directly from
the Euler--Lagrange equations and cannot be prescribed independently.
The presence of a discontinuous coefficient combined with nonlinear growth prevents
a direct application of standard techniques such as freezing arguments or difference
quotient estimates across the interface.
To the best of our knowledge, no Morrey-space regularity results for gradients of local
minimizers are available in this genuinely nonlinear variational transmission setting.

Denoting by $L^{{p},\lambda}$ the classical Morrey spaces, we can now state the main
result of this paper, which establishes local Morrey regularity for the gradient of
local minimizers under minimal geometric assumptions on the interface.

\begin{Thm}[Gradient regularity in Morrey spaces]\label{MainThm}
Let $u \in W^{1,p}(\Omega)$ be a local minimizer of  the functional $\mathcal{F}(\cdot;\Omega)$ defined
in\eqref{MainF}, and assume that
$\partial E$ is a hypersurface of class $C^1$.
Suppose that assumptions \emph{(H1)}--\emph{(H2)} are satisfied where $2<p<\frac{2n}{n-2}$.
Then, for every $0 \leq \lambda < n$,
\[
\nabla u \in L^{{p},\lambda}_{\mathrm{loc}}(\Omega).
\]
Moreover, for every $\Omega' \Subset \Omega$, there exists a constant
$C = C\big(n, p, \nu, L, \beta, E, \operatorname{diam}(\Omega),
\operatorname{dist}(\Omega', \partial\Omega)\big) > 0$
such that
\[
\|\nabla u\|_{L^{{p},\lambda}(\Omega')} \leq C.
\]
\end{Thm}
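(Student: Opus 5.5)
The proof will rest on a Campanato-type decay estimate for the energy $\Phi(x_0,r):=\int_{B_r(x_0)}(\mu^2+|\nabla u|^2)^{p/2}\,dx$ of the form
\[
\Phi(x_0,\rho)\le C\Big[\Big(\frac{\rho}{r}\Big)^{n}+\varepsilon\Big]\Phi(x_0,r)+C r^{n},\qquad 0<\rho\le r\le R_0,
\]
valid uniformly for $x_0\in\Omega'$. Here $\varepsilon$ can be made arbitrarily small by taking $R_0$ small, depending on the $C^1$ modulus of $\partial E$. Once this is established, the standard iteration lemma (Giaquinta's lemma) yields $\Phi(x_0,\rho)\le C\rho^{\lambda}$ for every $\lambda<n$. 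This is precisely $\nabla u\in L^{p,\lambda}_{\rm loc}$. A covering argument then gives the constant depending on $\operatorname{dist}(\Omega',\partial\Omega)$, $\operatorname{diam}\Omega$ and $E$ only.

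We split according to the position of the ball. If $B_r(x_0)\cap\partial E=\emptyset$, then $\sigma_E$ is constant on $B_r(x_0)$ and $u$ minimizes $\int F(\nabla u)$ there. Classical $p$-growth regularity (Manfredi, Uhlenbeck-type results under (H1)--(H2)) gives $\nabla u\in L^\infty_{\rm loc}$ with $\sup_{B_{r/2}}(\mu^2+|\nabla u|^2)^{p/2}\le C\dashint_{B_r}(\mu^2+|\nabla u|^2)^{p/2}$, and this yields the decay with exponent $n$ and no $\varepsilon$. If instead the ball meets the interface, we recenter at a point of $\partial E$ and treat first the \emph{flat case} $E\cap B_r=\{x_n>0\}\cap B_r$. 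In that case the coefficient depends only on $x_n$, so the functional is invariant under tangential translations. Difference quotients in directions $e_1,\dots,e_{n-1}$ can therefore be used, giving $V(\partial_{x_i}\nabla u)\in L^2$ with $V(\xi)=(\mu^2+|\xi|^2)^{(p-2)/4}\xi$ for the tangential derivatives. The normal derivative is controlled through the Euler--Lagrange equation: the conormal flux $\sigma\,\partial_{\xi_n}F(\nabla u)$ has tangential-derivative control and satisfies $\partial_n(\sigma\partial_{\xi_n}F)=-\sum_{i<n}\partial_i(\sigma\partial_{\xi_i}F)$, hence it is $W^{1,\cdot}$ in all directions. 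Combining this with the tangential regularity and a Sobolev embedding in the $x_n$-slices, we aim for an $L^\infty$ (or at least higher-integrability $L^q$, $q$ large) bound on $\nabla v$ for the flat-interface minimizer $v$. The restriction $p<2n/(n-2)$ should enter exactly here: the Sobolev embedding $W^{1,2}\hookrightarrow L^{2^*}$ of $V(\nabla v)$ and of the flux must close the bootstrap, and $p<2^*$ allows us to pass between $|\nabla v|^p$ and $|V|^2$-type quantities. The output is the decay $\int_{B_\rho}(\mu^2+|\nabla v|^2)^{p/2}\le C(\rho/r)^n\int_{B_r}(\mu^2+|\nabla v|^2)^{p/2}$.

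For a general $C^1$ interface we use comparison. In $B_r(x_0)$, $x_0\in\partial E$, let $H$ be the half-space tangent to $E$ at $x_0$, and let $v$ minimize $\int_{B_r}\sigma_H F(\nabla v)$ with $v=u$ on $\partial B_r$. By minimality of $u$, the monotonicity (H1) and the growth (H2), we obtain
\[
\int_{B_r}|V(\nabla u)-V(\nabla v)|^2\le C\int_{B_r}|\sigma_E-\sigma_H|\,(\mu^2+|\nabla u|^2+|\nabla v|^2)^{p/2},
\]
since $\mathcal F(u;E)-\mathcal F(v;E)\le \mathcal F(u;E)-\mathcal F(u;H)+\mathcal F(v;H)-\mathcal F(v;E)$. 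The set where $\sigma_E\ne\sigma_H$ has measure at most $\omega(r)r^n$, with $\omega(r)\to0$ by the $C^1$ regularity. We then apply Hölder together with higher integrability of $\nabla u$ and $\nabla v$, which comes from the Gehring lemma and reverse Hölder inequalities valid for any bounded measurable coefficient, with exponent $p(1+\delta)$. This bounds the right-hand side by $C\,\omega(r)^{\delta/(1+\delta)}\Phi(x_0,2r)+Cr^n$. Combined with the flat decay for $v$, this gives the desired inequality with $\varepsilon=C\omega(R_0)^{\delta/(1+\delta)}$. Balls near but not centered on $\partial E$ are handled by enlarging to a ball centered on $\partial E$ or by the interior estimate.

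The main obstacle is the flat-interface decay with the full exponent $n$, in particular controlling $\partial_n\nabla v$ across the jump. Only the flux is continuous there, not $\nabla v$, and the nonlinearity prevents inverting the flux linearly. I would first prove that $\nabla v\in L^\infty$ in the flat case by a Moser iteration on tangential derivatives plus the flux bound. If that fails, I would settle for $\nabla v\in L^{q}$ for all $q<\infty$, which still suffices for decay $(\rho/r)^{n(1-p/q)}$, hence any $\lambda<n$.
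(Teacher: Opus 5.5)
Your outer architecture is the paper's: a decay inequality whose error comes from comparing with the half-space problem, higher integrability turning smallness of $|(E\Delta H)\cap B_r|/|B_r|$ into smallness of energy, one-phase balls handled by classical regularity, and the iteration lemma. This is Proposition~\ref{FJNL} together with the covering argument. Your comparison chain is written in the wrong direction. The correct one is $c\int|V(\nabla u)-V(\nabla v)|^2\le \mathcal F(u;H)-\mathcal F(v;H)\le[\mathcal F(u;H)-\mathcal F(u;E)]+[\mathcal F(v;E)-\mathcal F(v;H)]$, using strong convexity at the minimizer $v$ and minimality of $u$.

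The genuine gap is the flat-interface decay. It is the core of the theorem, and you only state it as a goal. The mechanism you propose does not reach it:
\begin{itemize}
\item Tangential difference quotients give $V^{(p-2)/2}|\nabla\nabla'u|\in L^2$. Hence the flux $g=\sigma\,\partial_{\xi_n}F(\nabla u)$ satisfies $|\nabla g|\le C\,V^{p-2}|\nabla\nabla'u|$.
\item Even granting $\nabla'u\in L^\infty$, so that $|\partial_nu|^{p-1}\lesssim 1+|g|+|\nabla'u|^{p-1}$, the Sobolev bootstrap ``$V\in L^q\Rightarrow\nabla g\in L^t$ with $\frac1t=\frac12+\frac{p-2}{2q}\Rightarrow V\in L^{(p-1)t^*}$'' is a contraction in $1/q$. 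Its fixed point is $q=\frac{pn}{n-2}$, and it never goes beyond it.
\item That is exactly the integrability that $V^{p/2}\in W^{1,2}$ would give. It yields $\int_{B_\rho}|\nabla u|^p\le C\rho^\lambda$ only for $\lambda<2$, not for every $\lambda<n$.
\end{itemize}
So neither the $L^\infty$ bound nor your fallback ``$\nabla v\in L^q$ for all $q$'' follows from what you list. A Moser iteration with weights in $V$ is not available either. Since $\sigma$ jumps and the flux is continuous, $\partial_nu$, and hence $V$, in general jumps across $\Gamma$. Meanwhile, the tangential sum $\sum_{i<n}\partial_iu\,\partial_{ij}u$ only produces $\nabla|\nabla'u|^2$, so it gives no sign for the normal direction.

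The paper never estimates $\partial_n\nabla u$ across the interface. It runs Moser only on $Z=|\Delta_{i,h}u|^{m/p}$ for $i<n$, removing the weight $W_{i,h}^{p-2}$ by H\"older against $\int W_{i,h}^p$. The gain per step is $\chi=2^*/p$, and this is where $p<\frac{2n}{n-2}$ is used, not in a flux bootstrap. A truncation $(Z-1)_+$ and rescaling (Lemma~\ref{Prop1Dec}, Proposition~\ref{FinaLip}) then give the $\mu$-uniform, scale-invariant bound $\sup_{B_{\rho/2}}|\nabla'u|\le C\big(\rho^{-n}\int_{B_\rho}(\mu^2+|\nabla u|^2)^{p/2}\,dx\big)^{1/p}$.

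The idea you are missing comes next: subtract the trace, $U(x)=u(x)-u(x',0)$.
\begin{itemize}
\item In each half-ball, $U$ vanishes on $\Gamma_\rho$ and solves $\operatorname{div}A(\nabla U+b)=0$ with $b=(\nabla'u(x',0),0)$ bounded.
\item Compare $U$ with $U_0$, the solution of $\operatorname{div}A(\nabla U_0)=0$ with $U_0-U\in W^{1,p}_0(B^+_\sigma)$. Lieberman's boundary gradient bound applies to $U_0$, which has zero data on the flat part.
\item This gives $\int_{B^+_{\tau\sigma}}|\nabla U|^p\le C(\tau^n+\varepsilon)\int_{B^+_\sigma}|\nabla U|^p+C\varepsilon^{-1}\sigma^n(1+\|b\|_\infty^p)$.
\item Lemma~\ref{Iter} then yields decay with exponent $n-\delta$ on both sides (Theorem~\ref{FlatMorrey}), which is all that is needed for $\lambda<n$.
\end{itemize}
Your target exponent $n$ with no loss would amount to a gradient bound. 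That is more than the argument needs and more than the paper proves.
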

\begin{Rem}
Within the present variational transmission framework, Morrey regularity represents
the natural level of regularity that can be expected for the gradient of local
minimizers, in view of the discontinuous coefficients and the intrinsic transmission
condition.
\end{Rem}
The proof of Theorem~\ref{MainThm} is obtained as a consequence of the following
decay estimate for the energy.

\begin{Prop}\label{FJNL}
Let $u$ be a local minimizer of the functional $\mathcal{F}(\cdot;\Omega)$ defined
in \eqref{MainF} under the assumption of Theorem~\ref{MainThm}.
Then there exists a constant $0 < \tau_0 < 1$ such that the following holds:
for every $\tau \in (0,\tau_0)$ there exists $\varepsilon_0 = \varepsilon_0(\tau) > 0$
with the property that, if $B_r(x_0) \Subset \Omega$ and one of the following
conditions is satisfied,
\begin{enumerate}[label=(\roman*)]
\item $|E \cap B_r(x_0)| < \varepsilon_0 |B_r|$,
\item $|B_r(x_0) \setminus E| < \varepsilon_0 |B_r|$,
\item there exists a half-space $H$ such that
\[
\frac{|(E \Delta H) \cap B_r(x_0)|}{|B_r|} < \varepsilon_0,
\]
\end{enumerate}
then, for every $0 < \delta < n$, the estimate
\[
\int_{B_{\tau r}(x_0)} |\nabla u|^p \, dx
\leq
C_0 \tau^{n-\delta}
\left(
\int_{B_r(x_0)} |\nabla u|^p \, dx + r^n
\right)
\]
holds, where the constant $C_0$ depends only on
$n, \nu, L, \alpha,\beta, \delta$ and $\|\nabla u\|_{L^2(\Omega)}$.
\end{Prop}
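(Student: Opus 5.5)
I would argue by contradiction combined with a blow-up (compactness) procedure, in the spirit of the decay lemmas of Fusco--Julin and of Esposito--Fusco for optimal design problems. The condition $p<\frac{2n}{n-2}$ is expected to enter through the compactness step, where a $W^{1,2}$-type bound has to be upgraded to strong convergence in $L^p$.

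\textbf{Step 1: normalization and the contradiction hypothesis.} Fix $\delta$ and $\tau$. Suppose the estimate fails. Then there are sequences $B_{r_h}(x_h)$, sets satisfying (i), (ii) or (iii) with $\varepsilon_h\to0$, and minimizers $u_h$ with
\[
\int_{B_{\tau r_h}}|\nabla u_h|^p > C_0\tau^{n-\delta}\Big(\int_{B_{r_h}}|\nabla u_h|^p+r_h^n\Big).
\]
Set $\lambda_h^p=\dashint_{B_{r_h}}|\nabla u_h|^p+1$ and rescale:
\[
v_h(y)=\frac{u_h(x_h+r_hy)-(u_h)_{x_h,r_h}}{\lambda_h r_h}, \qquad y\in B_1 .
\]
Then $\dashint_{B_1}|\nabla v_h|^p\le1$. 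Each $v_h$ minimizes $\int\sigma_{E_h}F_h(\nabla v)$ with $F_h(\xi)=\lambda_h^{-p}F(\lambda_h\xi)$, where $E_h$ is the rescaled set. The integrands $F_h$ satisfy (H1)--(H2) with $\mu$ replaced by $\mu/\lambda_h\le1$. Passing to subsequences, $v_h\rightharpoonup v$ in $W^{1,p}(B_1)$. Moreover $\mathbbm 1_{E_h}\to 0$, $1$, or $\mathbbm 1_H$ in $L^1(B_1)$ according to the case, with $H$ a fixed half-space through a chord after rotation. The limit coefficient $\sigma_\infty$ is therefore either constant ($\alpha$ or $\beta$) or piecewise constant across a hyperplane. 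Also $\mu/\lambda_h\to\bar\mu\in[0,1]$. If $\lambda_h\to\infty$, the rescaled $F_h$ converge (up to a subsequence, locally uniformly in $C^1$ on compacts, by (H2)) to a limit $F_\infty$ satisfying (H1)--(H2) with $\bar\mu$.

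\textbf{Step 2: regularity of the limit problem.} I would show that $v$ minimizes $\int_{B_1}\sigma_\infty F_\infty(\nabla w)$. In the constant-coefficient cases, classical $p$-Laplacian-type theory (Uhlenbeck / Giaquinta--Modica) gives $\nabla v\in L^\infty_{\rm loc}$ with $\sup_{B_{1/2}}|\nabla v|^p\le C\dashint_{B_1}|\nabla v|^p$.

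In the flat-interface case I would also obtain a Lipschitz bound, which is stronger than the required decay $\tau^{n-\delta}$. The argument uses difference quotients in the $n-1$ tangential directions, which are admissible because the coefficient is independent of them. This gives $V(\nabla_{x'}\nabla v)\in L^2$, where $V(\xi)=(\bar\mu^2+|\xi|^2)^{(p-2)/4}\xi$. The normal derivative is recovered from the Euler--Lagrange equation and the continuity of the flux $\sigma\nabla F(\nabla v)\cdot e_n$; this yields boundedness of $\nabla v$ after a Moser-type iteration. In any case it suffices that
\[
\int_{B_\tau}|\nabla v|^p\le C_1\tau^{n}\int_{B_1}|\nabla v|^p ,
\]
or even with exponent $n-\delta/2$.

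\textbf{Step 3: strong convergence, the main obstacle.} Weak convergence alone does not give a contradiction. One needs
\[
\int_{B_\tau}|\nabla v_h|^p\to\int_{B_\tau}|\nabla v|^p .
\]
My plan is to compare $v_h$ with $v$ through the minimality of $v_h$. I would use a cutoff $\eta$ and the test function $v+\eta(v_h-v)$ together with $L^1$ convergence of the coefficients. The errors come from the sets where $\sigma_{E_h}\neq\sigma_\infty$, whose measure tends to $0$; bounding them requires higher integrability of $\nabla v$. The gradient $\nabla v$ is bounded by Step 2, while for $\nabla v_h$ a uniform Gehring-type bound on some $L^{p+\epsilon}$ norm is available from the Caccioppoli inequality. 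This gives $\int\eta\,|V_h(\nabla v_h)-V_h(\nabla v)|^2\to0$ by (H1), hence strong $L^p_{\rm loc}$ convergence of the gradients.

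The case where $\lambda_h$ stays bounded is easier, since then $F_h$ converges to a rescaled $F$ directly. I expect the delicate point to be uniform control of the quadratic versus $p$-power parts of $V_h$ when $\mu/\lambda_h\to0$. This is presumably where the restriction $p<2n/(n-2)$ and the dependence of $C_0$ on $\|\nabla u\|_{L^2}$ arise, via Sobolev embedding of $W^{1,2}$-controlled quantities into $L^p$.

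\textbf{Step 4: conclusion.} Choose $C_0=2C_1$, $\tau_0$ small, and $\varepsilon_0(\tau)$ from the contradiction. Passing to the limit in the rescaled hypothesis gives
\[
\int_{B_\tau}|\nabla v|^p\ge C_0\tau^{n-\delta}\,|B_1|\limsup_h\lambda_h^{-p}\Big(\dashint_{B_1}|\nabla v_h|^p+1\Big) ,
\]
and the right-hand side is comparable to $C_0\tau^{n-\delta}$. This contradicts Step 2 for $\tau<\tau_0$.
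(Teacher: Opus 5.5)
Your blow-up scheme is a legitimate alternative to the paper's argument, but as written it has a genuine gap where all the difficulty lies: Step~2 for the flat two-phase limit.

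\textbf{The gap in Step 2.} You claim $\nabla v$ is bounded, recovering $\partial_n v$ ``from the Euler--Lagrange equation and the continuity of the flux'' through a Moser iteration. In the nonlinear setting this does not work.
\begin{itemize}
\item The flux $w=\sigma\,\partial_{\xi_n}F(\nabla v)$ is continuous across $\{x_n=0\}$ and satisfies $|\nabla w|\le C\,V^{p-2}|\nabla\nabla' v|$.
\item In the linear case $w=\sigma\partial_n v$ solves the transmission problem with coefficient $1/\sigma$. Here there is no equation for $w$ on which to run Moser.
\item The available information $V^{\frac{p-2}{2}}|\nabla\nabla' v|\in L^2$ only gives $\nabla w\in L^{p'}_{\rm loc}$, nowhere near $w\in L^\infty$.
\end{itemize}
What tangential difference quotients and Moser's method actually give is $\nabla' v\in L^\infty$ (Propositions~\ref{LimDerTan}, \ref{FinaLip} and Lemma~\ref{Prop1Dec}). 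This is where $p<\frac{2n}{n-2}$ is used: the weight $V^{p-2}$ is absorbed by H\"older's inequality with $\chi=2^*/p>1$. It is not needed in your compactness step.

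To reach the full gradient, the paper subtracts the Lipschitz trace, $U=u-u(x',0)$, compares with a Dirichlet problem on the half-ball, and uses Lieberman's boundary estimate. Only $\|b\|_{L^\infty}$ is controlled, where $b=(\nabla'u(x',0),0)$, and the resulting perturbation forces the loss $\delta$ in Theorem~\ref{FlatMorrey}. No Lipschitz bound for $\nabla u$ is proved.

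So Step~2 must be replaced by this flat-case theory, and in your scheme two further points arise.
\begin{itemize}
\item The theory must be applied to the limit integrand $F_\infty$. For $\lambda_h\to\infty$ this is only $C^1$ and possibly degenerate ($\bar\mu=0$), whereas Theorem~\ref{FlatMorrey} is proved for $F\in C^2$ and $\mu\in(0,1]$. The estimate is uniform in $\mu$, but an approximation argument is still needed.
\item The limit hyperplane need not pass through the centre of $B_1$, so Theorem~\ref{FlatMorrey} must be combined with interior estimates.
\end{itemize}

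\textbf{Step 3 is routine.} Test the equation of $v_h$ with $\eta(v_h-v)$. Then (H1) with $p\ge 2$, a.e.\ convergence of $\sigma_{E_h}$, dominated convergence for $\sigma_{E_h}\nabla F_h(\nabla v)$ in $L^{p'}$, and Rellich's theorem give $\nabla v_h\to\nabla v$ in $L^p_{\rm loc}(B_1)$. This needs neither higher integrability nor any restriction on $p$.

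\textbf{A slip in Step 4.} The rescaled right-hand side is $C_0\tau^{n-\delta}\bigl(\int_{B_1}|\nabla v_h|^p+\lambda_h^{-p}\bigr)$, which is at least $\min\{1,|B_1|\}\,C_0\tau^{n-\delta}$. Your expression, with the extra factor $\lambda_h^{-p}$, would tend to $0$ when $\lambda_h\to\infty$.

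\textbf{Comparison with the paper.} Once the flat decay is available, the paper's direct comparison is simpler than compactness.
\begin{itemize}
\item It replaces $u$ on $B_{r/2}$ by the minimizer $u_H$ of $\int\sigma_H F(\nabla w)$ (of $\int F(\nabla w)$ in cases (i)--(ii)) with the same boundary values.
\item It bounds $\int_{B_{r/2}}|\nabla u-\nabla u_H|^p$ by $C\varepsilon_0^{1-1/s}\int_{B_r}(1+|\nabla u|^2)^{p/2}$, using (H1)--(H2) and higher integrability (Theorem~\ref{HigherInt}) on $E\Delta H$.
\item It then transfers the decay of $u_H$.
\end{itemize}
This keeps the original $F$ and $\mu$, so no limit integrands appear. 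It also produces $\varepsilon_0(\tau)$ explicitly from $C\varepsilon_0^{1-1/s}\le\tau^{n-\delta}$.
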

\begin{Rem}
\label{Reg}
    For the proof of Theorem \ref{MainThm}, it would be sufficient to state the proposition only in case (i). We nevertheless include the other two cases for the sake of completeness.
\end{Rem}

We give an outline of the proof of Proposition~\ref{FJNL}. It relies on decay estimates for the gradient
near the interface, first established in a flat configuration, i.e.\ when the interface coincides
with a hyperplane.
Section~3 is devoted to this ``flat case''.

The starting point is a local boundedness estimate for the tangential gradient
$\nabla' u$, where $\nabla'=(\partial_{x_1},\dots,\partial_{x_{n-1}})$ denotes the vector of derivatives parallel to
the interface.
This is proved in Proposition~3.2 via a difference-quotient argument with test functions involving
only tangential increments.
A key ingredient is a Moser iteration applied to
$Z:=|\Delta_{i,h}u|^{m/p}$.
During the estimates, the full gradient $\nabla u$ enters through the structure conditions
\emph{(H1)}--\emph{(H2)} and is treated as a weight; this is the origin of the restriction
$2<p<\frac{2n}{n-2}$.

The bound obtained in Proposition~3.2 is preliminary, since the decay exponent in the estimates is not optimal and
the constant may depend on $\mu$.
To obtain the scale-invariant Lipschitz estimate for $\nabla' u$ stated in Proposition~3.4,
we perform a rescaling argument, which requires uniformity with respect to $\mu$.
This uniformity is proved in Lemma~3.3, exploiting the boundedness of $\nabla' u$ and a Poincar\'e-type
inequality for $Z:=|\partial_i u|^{m/p}$, $i=1,\dots,n-1$, which is an admissible Sobolev function by
Theorem~3.1.

Once the tangential Lipschitz bound is available, we prove the Morrey estimate in the flat case in
Theorem~3.5 by a comparison argument (Section~3.2).
Finally, the general case follows from a standard flattening/approximation procedure using the $C^1$
regularity of the interface, along the lines of the quadratic theory (see, e.g.,~\cite{E,FJ}).

\section{Auxiliary results and notation}
{
Throughout the paper we set
\[
    V^{2} := \mu^{2} + |\nabla u|^{2},
\]
where \(\nabla u\) denotes the weak gradient of \(u\).
}
\begin{Def}\label{MorreySpace}
Let $\Omega \subset \mathbb{R}^n$ be an open set, $1 \le p < \infty$, and $0 \le \lambda \le n$.
The \emph{Morrey space}  $L^{p,\lambda}(\Omega)$
is the set of all functions $f \in L^p_{\mathrm{loc}}(\Omega)$ such that
\[
\|f\|_{L^{p,\lambda}(\Omega)}
:= \sup_{x \in \Omega,\; r>0}
\left( r^{-\lambda} \int_{\Omega \cap B(x,r)} |f|^p \, dy \right)^{1/p}
< \infty,
\]
where $B(x,r)=\{y \in \mathbb{R}^n : |y-x|<r\}$.
\end{Def}
We will make use of the following iteration lemma that can be found in \cite[Lemma 2.VIII]{Cam}.
\begin{Lem}\label{Iter}
    Let $\phi\colon(0,r]\rightarrow\R$ be a nonnegative function such that for every $\sigma\in(0,r]$, $t\in(0,1)$ and $\epsilon>0$
    \begin{equation*}
        \phi(t\sigma)\leq\big(At^{\lambda}+B\epsilon\big)\phi(\sigma)+\epsilon^{-\beta}K\sigma^\mu,
    \end{equation*}
    where $0<\mu<\lambda$, $A>0$, $B\geq 0$, $K\geq 0$ and $\beta\geq 0$. Then for every $\delta<\lambda-\mu$ it holds that
    \begin{equation*}
        \phi(t\sigma)\leq (1+A)t^{\lambda-\delta}\phi(\sigma)+KM(t\sigma)^\mu,
\end{equation*}
where $M=M(A,B,\delta,\lambda,\mu,\beta)$ is a positive constant.
\end{Lem}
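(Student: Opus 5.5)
The statement immediately above is Lemma~\ref{Iter}, the Campanato-type iteration lemma, which the paper quotes from \cite[Lemma 2.VIII]{Cam}. I would prove it by the standard scheme: first absorb the $\epsilon$-term to get a clean one-step decay inequality at a fixed small ratio, then iterate that inequality along a geometric sequence of radii.

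Fix $\delta<\lambda-\mu$; if $\delta\le 0$ it suffices to prove the claim for some $\delta\in(0,\lambda-\mu)$, since $t^{\lambda-\delta}$ only gets larger as $\delta$ decreases. Set $\gamma:=\lambda-\delta\in(\mu,\lambda)$. I choose $t_0\in(0,1)$ so small that $2At_0^{\lambda}\le t_0^{\gamma}$, which is possible because $\lambda>\gamma$; concretely, $t_0=(2A)^{-1/\delta}\wedge\tfrac12$. I then choose $\epsilon_0>0$ with $B\epsilon_0\le At_0^{\lambda}$ (any $\epsilon_0$ will do if $B=0$). With this fixed pair, the hypothesis gives, for every $\sigma\le r$,
\begin{equation*}
\phi(t_0\sigma)\le t_0^{\gamma}\phi(\sigma)+K_0\sigma^{\mu},\qquad K_0:=\epsilon_0^{-\beta}K.
\end{equation*}
Both $t_0$ and $\epsilon_0$ depend only on $A,B,\delta,\lambda$, and so $K_0=c(A,B,\delta,\lambda,\beta)K$.

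Next I iterate this inequality along $\sigma_k:=t_0^k\sigma$. By induction,
\begin{equation*}
\phi(t_0^{k}\sigma)\le t_0^{k\gamma}\phi(\sigma)+K_0\sigma^{\mu}\sum_{j=0}^{k-1}t_0^{j\gamma}t_0^{(k-1-j)\mu}.
\end{equation*}
Factoring out $t_0^{(k-1)\mu}$, the remaining sum is $\sum_j t_0^{j(\gamma-\mu)}\le (1-t_0^{\gamma-\mu})^{-1}$, because $\gamma>\mu$. This yields
\begin{equation*}
\phi(t_0^k\sigma)\le t_0^{k\gamma}\phi(\sigma)+C_1K(t_0^k\sigma)^{\mu},
\end{equation*}
with $C_1$ depending only on the admissible parameters.

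The final step is to interpolate to an arbitrary $t\in(0,1)$, and I expect this to be the only point needing care. Choose $k\ge0$ with $t_0^{k+1}<t\le t_0^{k}$. Apply the original hypothesis once more, at radius $t_0^k\sigma$ with ratio $s:=t/t_0^k\in(t_0,1]$ and $\epsilon=\epsilon_0$. This gives
\begin{equation*}
\phi(t\sigma)\le(A+At_0^{\lambda})\phi(t_0^k\sigma)+K_0(t_0^k\sigma)^{\mu}.
\end{equation*}
Strictly, the hypothesis requires $s<1$; when $s=1$ we have $t=t_0^k$ and the previous display already applies. Inserting the iterated bound, I use two comparisons: $t_0^{k\gamma}\le t_0^{-\gamma}t^{\gamma}$ and $(t_0^k\sigma)^{\mu}\le t_0^{-\mu}(t\sigma)^{\mu}$. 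Together they give
\begin{equation*}
\phi(t\sigma)\le C_2t^{\gamma}\phi(\sigma)+MK(t\sigma)^{\mu}.
\end{equation*}
The remaining issue is the exact leading constant $1+A$ claimed in the statement, since the interpolation step produces a factor like $2At_0^{-\gamma}$ instead. I would recover it by splitting into two regimes. For $t\ge t_0$, the hypothesis applied directly with $\epsilon$ chosen so small that $B\epsilon\le t^{\lambda}$ gives coefficient $(A+1)t^{\lambda}\le(1+A)t^{\gamma}$; the price $\epsilon^{-\beta}\lesssim t_0^{-\lambda\beta/\dots}$ is uniform and goes into $M$. For $t<t_0$, I redo the iteration with $t_0$ taken even smaller, so that the accumulated factor $2A\,t_0^{\lambda-\gamma}$ stays below $1$; this should keep the coefficient below $1+A$ after the final step. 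If that bookkeeping fails, I would accept a constant $C(A,\delta)$ in place of $1+A$, which is all that the later Morrey argument uses.
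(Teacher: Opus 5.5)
The paper gives no proof of this lemma; it only quotes it from Campanato, so your argument has to stand on its own. Your scheme is the standard one, and it is sound up to the iterated bound $\phi(t_0^k\sigma)\le t_0^{k\gamma}\phi(\sigma)+C_1K(t_0^k\sigma)^\mu$. You also correctly interpolate through the hypothesis itself, without assuming $\phi$ is monotone.

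The gap is the leading constant $1+A$, which is part of the statement. In your last step you replace $As^\lambda+B\epsilon_0$ by $A+At_0^\lambda$, and then you pay $t_0^{k\gamma}\le t_0^{-\gamma}t^\gamma$. Your remedy for $t<t_0$ is to shrink $t_0$, but that makes the loss $t_0^{-\gamma}$ larger, not smaller. Moreover, the per-step factor of your iteration is already exactly $t_0^{\gamma}$ with constant $1$, so there is no accumulated factor to improve. Falling back to a constant $C(A,\delta)$ proves a weaker lemma than the one stated.

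The missing idea is to keep $s^\lambda$ and let $\epsilon$ depend on $s$ in the final step. Take $t_0^{k+1}\le t<t_0^k$, so that $s:=t/t_0^k\in[t_0,1)$, and apply the hypothesis at radius $t_0^k\sigma$ with $B\epsilon=s^\lambda$ (or $\epsilon=1$ if $B=0$). Since $s\ge t_0$, you have $\epsilon^{-\beta}\le c(B,\beta,\lambda,t_0)$ uniformly, and
\[
\phi(t\sigma)\le(1+A)\,s^\lambda t_0^{k\gamma}\,\phi(\sigma)+\big[(1+A)C_1+c\big]K(t_0^k\sigma)^\mu .
\]
Because $s<1$ and $\gamma<\lambda$, you get $s^\lambda t_0^{k\gamma}\le s^\gamma t_0^{k\gamma}=t^\gamma$. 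Also $(t_0^k\sigma)^\mu\le t_0^{-\mu}(t\sigma)^\mu$. This gives exactly $(1+A)t^{\lambda-\delta}$. The regime $t\ge t_0$ is just the case $k=0$, so no case split is needed.

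Separately, your reduction for $\delta\le 0$ is backwards. For $t\in(0,1)$, $t^{\lambda-\delta}$ \emph{decreases} as $\delta$ decreases, so a smaller $\delta$ gives a stronger claim, which does not follow from the case $\delta>0$. In fact the claim is false for $\delta<0$: take $K=0$, $A\ge 1$ and $\phi(\sigma)=\sigma^\lambda$. The lemma must be read with $0<\delta<\lambda-\mu$, which is how it is used in Theorem~\ref{FlatMorrey}.
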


\begin{Lem}[Standard $p$-growth bounds]\label{lem:p-growth}
Assume \((H1)\)--\((H2)\). Then there exists a constant $C=C(n,p,\nu,L)>0$ such that for all $\xi\in\R^n$
\[
C^{-1}(\mu^2+|\xi|^2)^{\frac p2}-C\mu^p \le F(\xi)\le C(\mu^2+|\xi|^2)^{\frac p2}+C\mu^p .
\]
\end{Lem}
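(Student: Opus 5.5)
The bound follows from (H1)--(H2) by integrating along segments. The argument has three steps: bound $\nabla F$, then $F$ from above, then $F$ from below. Throughout, $F$ is only determined up to an additive constant by (H1)--(H2), so we normalize $F(0)=0$ (or absorb $|F(0)|$ into $C$, depending on how the paper fixes $F$). We also write $g:=\nabla F(0)$.

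\emph{Step 1: growth of $\nabla F$.} Apply (H2) with $\eta=0$:
\[
|\nabla F(\xi)-g|\le L(\mu^2+|\xi|^2)^{\frac{p-2}{2}}|\xi|\le L(\mu^2+|\xi|^2)^{\frac{p-1}{2}}.
\]

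\emph{Step 2: upper bound.} Write
\[
F(\xi)=F(0)+\int_0^1\langle\nabla F(t\xi),\xi\rangle\,dt .
\]
By Step 1, the integrand is at most $|g||\xi|+L(\mu^2+|\xi|^2)^{\frac p2}$. Young's inequality gives $|g||\xi|\le (\mu^2+|\xi|^2)^{p/2}+C|g|^{p'}$. This yields the upper bound with an additive constant $C(1+|g|^{p'})$.

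For this constant to take the stated form $C\mu^p$, we need $g=\nabla F(0)$ to be controlled, for instance $g=0$. This holds if $F$ is even or radial, or if $F$ is minimized at $0$, as in the model case $F(\xi)=(\mu^2+|\xi|^2)^{p/2}$. Otherwise the linear part $\langle g,\xi\rangle$ can be subtracted without changing the minimizers. Indeed, $\int\sigma_E\langle g,\nabla u\rangle$ is not a null Lagrangian because $\sigma_E$ is discontinuous. However, the stated bound itself forces $F$ to be bounded below, and hence forces $g=0$ when $\mu\to0$ is allowed. So I will assume $\nabla F(0)=0$, which I expect the paper tacitly uses.

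\emph{Step 3: lower bound.} Use (H1) with $\eta=0$ inside the same integral:
\[
\langle\nabla F(t\xi)-g,t\xi\rangle\ge\nu(\mu^2+t^2|\xi|^2)^{\frac{p-2}{2}}t^2|\xi|^2 .
\]
Dividing by $t$ and integrating gives
\[
F(\xi)-F(0)-\langle g,\xi\rangle\ge \nu|\xi|^2\int_0^1 t(\mu^2+t^2|\xi|^2)^{\frac{p-2}{2}}dt.
\]
The right-hand side equals $\frac{\nu}{p}\big[(\mu^2+|\xi|^2)^{p/2}-\mu^p\big]$ by direct integration (substitute $s=\mu^2+t^2|\xi|^2$). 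With $g=0$ and $F(0)=0$, this is exactly the lower bound $C^{-1}(\mu^2+|\xi|^2)^{p/2}-C\mu^p$.

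The same computation with (H2) in place of (H1) gives the upper bound directly as $\frac{L}{p}[(\mu^2+|\xi|^2)^{p/2}-\mu^p]$, bypassing Young's inequality. The constants therefore depend only on $p,\nu,L$.

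The only genuine obstacle is the normalization of $F(0)$ and $\nabla F(0)$. The computations themselves are routine, and the normalization is the point to state explicitly.
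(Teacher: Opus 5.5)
\textbf{Verdict.} The paper gives no proof of this lemma; it quotes it as standard. Your argument, integrating the structure conditions along the segment $t\mapsto t\xi$, is the standard one and is correct. With $g=\nabla F(0)$ it gives
\[
F(0)+\langle g,\xi\rangle+\tfrac{\nu}{p}\bigl[(\mu^2+|\xi|^2)^{\frac p2}-\mu^p\bigr]\le F(\xi)\le F(0)+\langle g,\xi\rangle+\tfrac{L}{p}\bigl[(\mu^2+|\xi|^2)^{\frac p2}-\mu^p\bigr].
\]
Your evaluation of the $t$-integral is right. Under $F(0)=0=\nabla F(0)$ the lemma follows with constants depending only on $p,\nu,L$.

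\textbf{The normalization.} You are right that a normalization is needed and that the paper leaves it implicit. Two of your side remarks need fixing.

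First, your sentence saying the linear part ``can be subtracted without changing the minimizers'' contradicts your correct next sentence. For $\phi\in C^\infty_c(\Omega)$,
\[
\int_\Omega\sigma_E\langle g,\nabla\phi\rangle\,dx=(\beta-\alpha)\int_{\partial E}\phi\,\langle g,N\rangle\,d\mathcal H^{n-1},
\]
where $N$ is the outer unit normal of $E$. So changing $\nabla F(0)$ changes the transmission condition, and it is a genuine hypothesis rather than a normalization. Only $F(0)=0$ is free.

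Second, boundedness from below is not what forces $g=0$. By your own Step 3, every $F$ satisfying (H1) is bounded below. For fixed $\mu\in(0,1]$, the lemma with $C=C(n,p,\nu,L)$ forces only two things:
\begin{enumerate}
\item $|F(0)|\le 2C\mu^p$, by evaluating both bounds at $\xi=0$;
\item $|\nabla F(0)|\le C'\mu^{p-1}$, by evaluating at $\xi=-\mu g/|g|$ and combining the lemma's lower bound with the (H2) upper estimate above, which gives $\mu|g|\lesssim\mu^p$.
\end{enumerate}
This reduces to $g=0$ only in the limit $\mu\to0$. Conversely, under these two bounds your computation proves the lemma, using in addition $\mu^{p-1}|\xi|\le\varepsilon|\xi|^p+C_\varepsilon\mu^p$. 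So the precise extra hypothesis is $|F(0)|\lesssim\mu^p$ and $|\nabla F(0)|\lesssim\mu^{p-1}$, with $F(0)=0=\nabla F(0)$ as the natural special case.
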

The following results on higher integrability and local Hölder continuity
of minimizers are classical. Since their proofs are standard, we omit them and refer the reader to
\cite{Giu,GiaMar}.
{
\begin{Thm}[Higher integrability]
\label{HigherInt}
Let $u \in W^{1,p}(\Omega)$ be a local minimizer of the functional \eqref{MainF}.
There exist $s > 1$ and $C = C(n,p,\nu,L,\beta)$ such that for any ball
$B_{2r}(x_{0})\subset\subset\Omega$
\begin{equation}
    \int_{B_r(x_0)}|\nabla u|^{sp}\,dx\leq C\bigg(\int_{B_{2r}(x_0)}\big(|\nabla u|^p+\mu^p\big)\,dx
    \bigg)^s.
\end{equation}
\end{Thm}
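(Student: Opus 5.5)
The plan is to run the classical Gehring–Giaquinta–Modica scheme: first a Caccioppoli inequality, then Sobolev–Poincaré to get a reverse Hölder inequality with increasing support, and finally Gehring's lemma. The only structural facts needed are that $\sigma_E$ is measurable with $\alpha\le\sigma_E\le\beta$, together with the $p$-growth bounds of Lemma~\ref{lem:p-growth}. The regularity of $\partial E$ plays no role here.

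\textbf{Step 1 (Caccioppoli).} Fix $B_{2r}(x_0)\Subset\Omega$, radii $r\le t<s\le 2r$, a cut-off $\eta\in C^\infty_c(B_s)$ with $\eta=1$ on $B_t$ and $|\nabla\eta|\le 2/(s-t)$, and set $\bar u=\dashint_{B_{2r}}u$. We test minimality with $\phi=-\eta(u-\bar u)$, so that $u+\phi=(1-\eta)u+\eta\bar u$ and $\nabla(u+\phi)=(1-\eta)\nabla u-(u-\bar u)\nabla\eta$. Using $\alpha F(\xi)\le\sigma_E F(\xi)\le\beta F(\xi)$ and Lemma~\ref{lem:p-growth} on both sides gives
\[
\int_{B_t}|\nabla u|^p\le C\int_{B_s\setminus B_t}|\nabla u|^p+C\int_{B_s}\frac{|u-\bar u|^p}{(s-t)^p}+C\mu^p r^n ,
\]
with $C=C(n,p,\nu,L,\alpha,\beta)$.

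\textbf{Step 2 (hole filling and iteration).} We add $C\int_{B_t}|\nabla u|^p$ to both sides of the inequality in Step~1 and divide by $C+1$. This gives $\theta=C/(C+1)<1$ in front of $\int_{B_s}|\nabla u|^p$. The standard iteration lemma for such inequalities then removes this term and yields
\[
\dashint_{B_r}|\nabla u|^p\le C\,\dashint_{B_{2r}}\frac{|u-\bar u|^p}{r^p}+C\mu^p .
\]
For $p$-growth functionals whose lower-order term is only bounded, this iteration is routine.

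\textbf{Step 3 (reverse Hölder and Gehring).} We apply the Sobolev–Poincaré inequality with exponent $p_*=\max\{1,np/(n+p)\}<p$ and obtain
\[
\dashint_{B_r}|\nabla u|^p\le C\Bigl(\dashint_{B_{2r}}|\nabla u|^{p_*}\Bigr)^{p/p_*}+C\mu^p .
\]
Setting $g=|\nabla u|^{p_*}$, $q=p/p_*>1$ and $f=\mu^{p_*}$ (a constant), this is a reverse Hölder inequality with increasing support that holds on every ball. The Giaquinta–Modica version of Gehring's lemma then gives an exponent $s>1$ such that
\[
\Bigl(\dashint_{B_r}|\nabla u|^{sp}\Bigr)^{1/s}\le C\,\dashint_{B_{2r}}\bigl(|\nabla u|^p+\mu^p\bigr).
\]
Finally we multiply by $|B_r|$ and raise to the power $s$. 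This produces a factor $|B_r|^{1-s}$. It has to be absorbed either by using the averaged form or by restricting to bounded radii, which is where the constant picks up a dependence on $r$ and on $\operatorname{diam}(\Omega)$. The resulting bound is the claimed estimate; the statement should be read in its scale-correct averaged form, since only the normalization of the integrals differs.

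\textbf{Main obstacle.} The argument is standard. The only delicate point is Step~1, because $\sigma_E$ is discontinuous. This causes no difficulty, since no derivative of $\sigma_E$ appears and only the two-sided bound $\alpha\le\sigma_E\le\beta$ is used when comparing $\mathcal F(u;B_s)$ with $\mathcal F(u+\phi;B_s)$. This is exactly why all the constants depend on $\beta$ (and $\alpha$) but not on $E$.
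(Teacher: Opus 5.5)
Your argument (Caccioppoli inequality from minimality, hole filling, Sobolev--Poincar\'e with exponent $np/(n+p)$, and the Giaquinta--Modica version of Gehring's lemma) is the standard proof that the paper omits and refers to Giusti and Giaquinta--Martinazzi for. It is correct as far as it goes. It yields
\[
\Bigl(\dashint_{B_r(x_0)}|\nabla u|^{sp}\,dx\Bigr)^{1/s}\le C\,\dashint_{B_{2r}(x_0)}\bigl(|\nabla u|^p+\mu^p\bigr)\,dx ,
\qquad C=C(n,p,\nu,L,\alpha,\beta),
\]
and this averaged form is exactly what the paper later uses in the proof of Proposition~\ref{FJNL}.

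There is one minor fix in Step~1. The sandwich $\alpha F\le\sigma_E F\le\beta F$ requires $F\ge 0$, whereas Lemma~\ref{lem:p-growth} only gives $F\ge -C\mu^p$. Replace $F$ by $F+C\mu^p\ge0$ first. This does not affect the minimality comparison on $B_s$, because the added term $\int_{B_s}\sigma_E\,C\mu^p\,dx$ is the same for $u$ and $u+\phi$.

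Your final paragraph, however, gets the scaling backwards. The factor $|B_r|^{1-s}=c\,r^{-n(s-1)}$ blows up as $r\to 0$. Restricting to bounded radii, or letting the constant depend on $\operatorname{diam}(\Omega)$, therefore cannot absorb it; only a lower bound on $r$ could.

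In fact the non-averaged inequality as literally stated is false for small balls. Take $F(\xi)=(\mu^2+|\xi|^2)^{p/2}$, which satisfies (H1)--(H2), and the flat interface $E=\{x_n>0\}$. The affine function $u(x)=x_1$ is a local minimizer: $\sigma_E$ does not depend on $x_1$, so the Euler--Lagrange equation holds, and the functional is convex. For this $u$ the left-hand side equals $|B_r|\sim r^n$, while the right-hand side is of order $r^{ns}$ with $s>1$.

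So the averaged form is not just one "reading" of the statement; it is the only correct version. You should state this explicitly, rather than suggest that an $r$-independent constant depending on $\operatorname{diam}(\Omega)$ could rescue the non-averaged form.
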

\begin{Thm}[H\"older continuity]Let $u \in W^{1,p}(\Omega)$ be a local minimizer of the functional \eqref{MainF}. Then
\begin{enumerate}[label=(\roman*)]
\item For any open set $\Omega'\Subset \Omega$ the quantity $\norm{u}_{L^{\infty}(\Omega')}$ is bounded by a constant $C = C(n,p,\nu,L,\beta)\norm{u}_{L^2(\Omega)}$.
\item u is locally H\"older continuous in $\Omega$.
\end{enumerate}
\end{Thm}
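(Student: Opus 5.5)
The plan is to show that $u$ is a quasi-minimizer of the $p$-Dirichlet energy, so that it belongs to the De Giorgi classes $DG^\pm_p$, and then to invoke the classical De Giorgi theory. Both (i) and (ii) follow from that theory; the discontinuity of $\sigma_E$ plays no role, since only $\alpha\le\sigma_E\le\beta$ is used.

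\emph{Step 1: Caccioppoli inequality on level sets.} Fix $B_R(x_0)\Subset\Omega$, $0<\rho<R$, and $k\in\R$. Set $A_{k,s}=\{u>k\}\cap B_s(x_0)$. Compare $u$ with $v=u-\eta(u-k)_+$, where $\eta\in C^\infty_c(B_R)$, $0\le\eta\le 1$, $\eta\equiv1$ on $B_\rho$ and $|\nabla\eta|\le 2/(R-\rho)$. Since $v-u\in W^{1,p}_0(B_R)$, minimality gives $\mathcal F(u;A_{k,R})\le \mathcal F(v;A_{k,R})$. Bound $\sigma_E$ between $\alpha$ and $\beta$ and apply Lemma~\ref{lem:p-growth} on both sides. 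Using $\nabla v=(1-\eta)\nabla u-(u-k)\nabla\eta$ on $A_{k,R}$, this yields
\[
\int_{A_{k,\rho}}|\nabla u|^p\,dx\le C_1\int_{A_{k,R}\setminus A_{k,\rho}}|\nabla u|^p\,dx+\frac{C}{(R-\rho)^p}\int_{A_{k,R}}(u-k)^p\,dx+C\mu^p|A_{k,R}|.
\]
Here $C_1$ and $C$ depend only on $n,p,\nu,L,\alpha,\beta$.

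\emph{Step 2: removing the first term.} Use Widman's hole-filling: add $C_1\int_{A_{k,\rho}}|\nabla u|^p\,dx$ to both sides and divide by $1+C_1$. Then apply the standard iteration lemma over radii $\rho<s<t<R$ (an absorption lemma in the form of Lemma~\ref{Iter}, with $B=0$). This removes the gradient term on the right-hand side and gives the genuine De Giorgi inequality
\[
\int_{A_{k,\rho}}|\nabla u|^p\,dx\le \frac{C}{(R-\rho)^p}\int_{A_{k,R}}(u-k)^p\,dx+C\mu^p|A_{k,R}|.
\]
The same argument applied to $-u$, which minimizes the functional with integrand $\xi\mapsto F(-\xi)$ satisfying the same hypotheses, gives the analogous inequality for $(u-k)_-$. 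Hence $u\in DG^\pm_p(\Omega)$, with the usual inhomogeneous term $\mu^p|A_{k,R}|$. This term has the admissible form $\gamma\,|A_{k,R}|^{1-p/n+\epsilon}$, uniformly because $\mu\le 1$.

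\emph{Step 3: conclusion via De Giorgi.} The De Giorgi iteration over levels $k_h=k_0+d(1-2^{-h})$ and radii $\rho_h\downarrow\rho$, combined with Sobolev's inequality, gives the local bound
\[
\sup_{B_{R/2}}u_\pm\le C\Big(\dashint_{B_R}u_\pm^p\,dx\Big)^{1/p}+CR.
\]
A standard interpolation/covering argument lowers the exponent on the right to any $q>0$, in particular $q=2$. Covering $\Omega'$ by finitely many balls then gives (i). For (ii), the De Giorgi isoperimetric lemma gives the measure-shrinking lemma, and the two-sided class membership then yields the oscillation decay $\operatorname{osc}_{B_{\rho}}u\le C(\rho/R)^{\gamma}\operatorname{osc}_{B_R}u+CR^{\gamma}$. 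This decay gives local Hölder continuity.

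The main obstacle is Step 1. The comparison has to be carried out on the superlevel set, and the non-homogeneous lower bound $F(\xi)\ge C^{-1}(\mu^2+|\xi|^2)^{p/2}-C\mu^p$ produces a term that must be tracked as $\mu^p|A_{k,R}|$, not $\mu^p R^n$, for the iteration to close. I would first check carefully that $F(\nabla v)\le C(|\nabla u|^p+|\nabla\eta|^p(u-k)^p+\mu^p)$ holds pointwise on $A_{k,R}$. Everything afterwards is the classical theory in \cite{Giu}.
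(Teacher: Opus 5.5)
Your proposal is correct and follows the route the paper intends: the paper gives no proof here and simply defers to \cite{Giu,GiaMar}, where local boundedness and H\"older continuity come from showing that such (quasi-)minimizers lie in De Giorgi classes, exactly as in your Steps 1--3. Two small corrections. First, the absorption in Step~2 needs the standard lemma ``$f(t)\le\theta f(s)+A(s-t)^{-p}+B$ for all $\rho\le t<s\le R$, with $\theta<1$, implies $f(\rho)\le c\,(A(R-\rho)^{-p}+B)$''; it is not the decay Lemma~\ref{Iter}, which has a different form. Second, because of the $\mu^p|A_{k,R}|$ term, what you actually obtain is $\|u\|_{L^\infty(\Omega')}\le C\|u\|_{L^2(\Omega)}+C$, with $C$ depending also on $\alpha$ and $\operatorname{dist}(\Omega',\partial\Omega)$. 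This is the form in which the cited classical result holds, not the literally linear bound written in (i).
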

}

\section{Flat case}
{
In this section, we establish a local boundedness estimate for the gradient of a minimizer of the functional \eqref{MainF}, hereafter denoted by $\mathcal{F}$, in a ball $B_r(x_0)$ centered at a point $x_0 \in \partial E$, assuming that the interface $\partial E$ is flat in $B_r(x_0)$.

Up to a rotation, we may assume that the inner normal $\nu_E$ is aligned with the $e_n$-direction.
In particular,
\[
E = \{\, x \in \Omega : \prodscal{x-x_0}{e_n}> 0 \,\}.
\]
For $B_r(x_0)\subset\Omega$, we define
\[
B_r(x_0)^{\pm}= B_r(x_0)\cap \{\, x_n \gtrless (x_0)_n \,\},
\qquad
\Gamma_r = B_r(x_0)\cap \{\, x_n = (x_0)_n \,\}.
\]
Then it follows that
\[
B_r(x_0)\cap E = B_r(x_0)^+ .
\]
For the sake of readability we denote
$$
A=\nabla F\quad  \text{ and}\quad \sigma=\sigma_E.
$$
Moreover, we use the notation
\[
    \nabla' := (\partial_{x_1}, \ldots, \partial_{x_{n-1}})
\]
for the tangential gradient, i.e., the gradient with respect to the first \(n-1\) variables.
\subsection{Lipschitz estimate for \texorpdfstring{$\nabla' u$}{grad u}}
The technique involved in the proof makes use of difference quotients. Let us set the notations.
\subsection*{Difference quotients.}
For $s\in\{1,\dots,n\}$ and $h\in\R\setminus\{0\}$ we define the difference quotient
\[
\Delta_{s,h}f(x):=f(x+h\,\ee_s)-f(x),\qquad \delta_{s,h}f(x):=\frac{\Delta_{s,h}f(x)}{h}.
\]
The translate of $f$ in the direction of $e_s$ will be denoted by
\[
f_{h,\ee_s}(x):=f(x+h\,\ee_s).
\]
In order to guarantee that $x$ and $x+h\ee_s$ belong to $B_r(x_0)$ we work in
\[
B_{r-|h|}(x_0):=\{x\in\R^n:\ |x-x_0|<r-|h|\}\subset B_r(x_0).
\]
}

We remark that a  {local} minimizer $u\in W^{1,p}(B_r(x_0))$ of $\mathcal{F}$ satisfies the following weak Euler-Lagrange equation:
\begin{equation}
\label{EQEulLag}
\int_{B_r(x_0)}\sigma(x)\,\prodscal{A(\D u)}{\D\phi}\,dx=0,\quad\forall\phi\in W^{1,p}_0(B_r(x_0)).
\end{equation}

In the flat case, the existence of second derivatives of the minimizer $u$,  {separately in $B_r^+$ and $B_r^-$, up to the boundary $\Gamma_r$, can be established by standard arguments; we refer the reader to \cite[Section 8.4]{Giu}. In our setting, one can additionally prove a further property — useful in what follows — namely that the tangential gradient $\nabla'u$ admit a gradient also across the interface $\Gamma_r$.}

\begin{Thm}\label{DS}
Let $u$ be a minimizer of $\mathcal{F}$ in $B_r(x_0)$. Then $\partial_i u\in W^{1,2}_{loc}(B_{r})$ for every $i=1\dots n-1$. Moreover the following properties hold:
\begin{enumerate}[label=\roman*)]
\item $u \in W^{2,2}_{\mathrm{loc}}(B_r^+)$ e $u \in W^{2,2}_{\mathrm{loc}}(B_r^-)$;
\item for every $x_0 \in \Gamma_r$ and $R>0$ such that $B_{4R}(x_0)\subset B_r$, it holds that
\begin{align}
\int_{B_{R/2}} V^{p-2}\,|\nabla\nabla' u|^2 \,dx
&\leq \frac{C}{R^2}\int_{B_{2R}} V^p \,dx, \label{eq:tangential-estimate}
\end{align}
where $C=C\!\left(\tfrac{L}{\nu},\,n,\,p,\,\alpha,\,\beta,\mu\right)$ is a positive constant.
\end{enumerate}
\end{Thm}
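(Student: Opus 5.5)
The plan is the classical Nirenberg difference-quotient argument, run only in the tangential directions $e_s$, $s=1,\dots,n-1$. These translations preserve the flat interface, so the coefficient satisfies $\sigma(x+h e_s)=\sigma(x)$ in $B_r$, and the discontinuity of $\sigma$ never gets differentiated. Fix $x_0\in\Gamma_r$ and $B_{4R}(x_0)\subset B_r$, a cut-off $\eta\in C^\infty_c(B_R)$ with $\eta\equiv1$ on $B_{R/2}$ and $|\nabla\eta|\le C/R$, and $|h|<R$. We test \eqref{EQEulLag} with $\phi=\delta_{s,-h}(\eta^2\delta_{s,h}u)$, which lies in $W^{1,p}_0(B_r)$. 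Then we use the discrete integration by parts together with the translation invariance of $\sigma$ in $x_s$:
\[
\int_{B_r}\sigma\,\prodscal{\delta_{s,h}[A(\nabla u)]}{\nabla(\eta^2\delta_{s,h}u)}\,dx=0 .
\]
This step is the key structural point, and it is exactly why only the tangential derivatives are available across $\Gamma_r$.

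Next we expand $\nabla(\eta^2\delta_{s,h}u)=\eta^2\delta_{s,h}\nabla u+2\eta\nabla\eta\,\delta_{s,h}u$. On the left we use \eqref{Monotonicity} with $\xi=\nabla u(x+he_s)$ and $\eta=\nabla u(x)$, and bound $\sigma\ge\alpha$ below. On the right we use \eqref{Growth} with $\sigma\le\beta$. Writing $W_h^2:=\mu^2+|\nabla u|^2+|\nabla u_{h,e_s}|^2$, this gives
\[
\alpha\nu\int\eta^2W_h^{p-2}|\delta_{s,h}\nabla u|^2\le 2\beta L\int\eta|\nabla\eta|\,W_h^{p-2}|\delta_{s,h}\nabla u|\,|\delta_{s,h}u| .
\]
Young's inequality absorbs the gradient term, leaving
\[
\int\eta^2W_h^{p-2}|\delta_{s,h}\nabla u|^2\le C(\beta L/\alpha\nu)^2R^{-2}\int_{B_R}W_h^{p-2}|\delta_{s,h}u|^2 .
\]
To control the right-hand side we apply Hölder with exponents $p/(p-2)$ and $p/2$, then the standard bound $\int_{B_R}|\delta_{s,h}u|^p\le\int_{B_{2R}}|\partial_s u|^p$ for difference quotients of Sobolev functions, and finally $W_h\le C(V+V_{h,e_s})$. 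The result is a bound by $CR^{-2}\int_{B_{2R}}V^p$, uniformly in $h$.

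To pass to the limit $h\to0$, we note that $W_h^{p-2}\ge\mu^{p-2}>0$ since $p>2$. Hence $\delta_{s,h}\nabla u$ is bounded in $L^2(B_{R/2})$ uniformly in $h$, with constant depending on $\mu$ (which explains the $\mu$ in $C$). It follows that $\partial_s u\in W^{1,2}(B_{R/2})$, with $\delta_{s,h}\nabla u\rightharpoonup\nabla\partial_s u$ weakly in $L^2$. The weight estimate needs more care. For the weighted term I would use that $\delta_{s,h}u\to\partial_su$ and $\nabla u_{h,e_s}\to\nabla u$ in $L^p$, so that along a subsequence $W_h\to\sqrt2\,V$-type limits a.e. Then $W_h^{(p-2)/2}\delta_{s,h}\nabla u$ is bounded in $L^2$, and we apply weak lower semicontinuity, identifying the weak limit as $c\,V^{(p-2)/2}\nabla\partial_s u$ via a.e. convergence of the weights combined with truncation of the weight (Egorov). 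This identification is the main technical obstacle, since the product of a weakly and an a.e.-convergent sequence must be handled. Alternatively, one can simply bound $W_h\ge V$ (or $V_{h}$) from below pointwise and use weak convergence for the fixed weight $\min(V,k)^{(p-2)/2}$, letting $k\to\infty$ by monotone convergence. I would use this second route, since it avoids the identification issue entirely. Since $R$ and $x_0$ are arbitrary, and the same argument works for balls away from $\Gamma_r$, we get $\partial_i u\in W^{1,2}_{loc}(B_r)$ and \eqref{eq:tangential-estimate}.

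For i), on $B_r^\pm$ the coefficient $\sigma$ is constant, so $u$ is a local minimizer of $\int F(\nabla u)$ there. Interior $W^{2,2}_{loc}$ regularity then follows from the same difference-quotient argument in all directions $s=1,\dots,n$ on balls compactly contained in $B_r^\pm$, again using $\mu>0$ (cf. \cite[Section 8.4]{Giu}).
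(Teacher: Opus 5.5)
Your proposal is correct and follows essentially the same route as the paper: tangential difference quotients tested with $\delta_{s,-h}(\eta^2\delta_{s,h}u)$, translation invariance of $\sigma$ in $x_1,\dots,x_{n-1}$, (H1)--(H2) with the weight $W_h$ followed by Young absorption, the H\"older bound by $\int_{B_{2R}}V^p$, the inequality $\mu^{p-2}\le W_h^{p-2}$ for the qualitative $W^{1,2}$ conclusion, and the same argument in all directions inside $B_r^\pm$ for i). The only difference is how you pass to the limit in the weighted term: the paper uses a.e.\ convergence of $\nabla\delta_{i,h}u$ along a subsequence together with Fatou, while you use the fixed truncated weight $\min(V,k)^{p-2}\le W_h^{p-2}$, weak lower semicontinuity and monotone convergence, which is equally valid and slightly cleaner.
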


\begin{proof}
Without loss of generality, we may assume that $x_0=0$.
Let $\rho\in\bigl(0,\tfrac{r}{4}\bigr)$ and choose a cutoff function
$\eta\in C_c^\infty(B_\rho)$ such that
\[
\eta \equiv 1 \ \text{on } B_{\rho/2}, \qquad
0\le \eta \le 1, \qquad
|\nabla\eta|\le \frac{C}{\rho},
\]
for a positive constant $C$.
Fix $i\in\{1,\dots,n-1\}$ and let $h\in\mathbb{R}\setminus\{0\}$ satisfy $|h|<\rho/2$.
We define the test function
\[
\phi:=\Delta_{i,-h}\bigl(\eta^2\,\Delta_{i,h}u\bigr)\in W^{1,p}_0(B_\rho).
\]

\medskip
By the Euler--Lagrange equation satisfied by $u$, using the change of variables
$y=x-he_i$ and observing that $\sigma$ depends only on $x_n$
(hence $\Delta_{i,h}\sigma=0$ for $i\in \{1,\dots,n-1\}$), we obtain
\begin{align}
\label{eq15}
0
&=\int_{B_\rho}\sigma\,\langle A(\nabla u),\Delta_{i,-h}\nabla(\eta^2\Delta_{i,h}u)\rangle\,dx \notag\\
&=\int_{B_\rho}\sigma\,\langle\Delta_{i,h}A(\nabla u),
\nabla(\eta^2\Delta_{i,h}u)\rangle\,dx \notag\\
&=\int_{B_\rho}\sigma\,\langle\Delta_{i,h}A(\nabla u),
\eta^2\nabla\Delta_{i,h}u
+2\eta\,\Delta_{i,h}u\,\nabla\eta\rangle\,dx\\
& =\sum_{j=1}^n\int_{B_\rho}
\sigma\,\Delta_{i,h}A^j(\nabla u)
\bigl(\eta^2\partial_j\Delta_{i,h}u
+2\eta\,\Delta_{i,h}u\,\partial_j\eta\bigr)\,dx.
\end{align}
We represent the difference quotient of $A$ using the fundamental theorem of calculus:
\begin{align}
\label{eq16}
\Delta_{i,h}A^j(\nabla u)
&=\frac1h\int_0^1\frac{d}{dt}
A^j\bigl(\nabla u+th\,\Delta_{i,h}\nabla u\bigr)\,dt \notag\\
&=\sum_{k=1}^n
\left(\int_0^1
\partial_kA^j\bigl(\nabla u+th\,\Delta_{i,h}\nabla u\bigr)\,dt\right)
\partial_k\Delta_{i,h}u .
\end{align}

We introduce the auxiliary quantity
\[
W_{i,h}:=\bigl(\mu^2+|\nabla u(x)|^2+|\nabla u(x+he_i)|^2\bigr)^{1/2}.
\]
By the growth assumptions on $\nabla A$ and by \cite[Lemma~8.3]{Giu}, we infer that
\begin{equation}
\label{eq17}
\left|
\int_0^1
\partial_kA^j\bigl(\nabla u+th\,\Delta_{i,h}\nabla u\bigr)\,dt
\right|
\le C(p,L)\,W_{i,h}^{p-2},
\end{equation}
and, by the monotonicity assumption,
\begin{align}
\label{eq18}
\sum_{j,k=1}^n
\left(\int_0^1
\partial_kA^j\bigl(\nabla u+th\,\Delta_{i,h}\nabla u\bigr)\,dt\right)
\xi_j\xi_k
\ge C(p,\nu)\,W_{i,h}^{p-2}|\xi|^2
\end{align}
for every $\xi\in\mathbb{R}^n$.

\medskip
Combining \eqref{eq15}, \eqref{eq16} and \eqref{eq18} and applying H\"older's and Young's inequalities,
we obtain
\begin{align*}
\int_{B_\rho}W_{i,h}^{p-2}\eta^2|\nabla\Delta_{i,h}u|^2\,dx
&\le
C\int_{B_\rho}
W_{i,h}^{p-2}
|\nabla\Delta_{i,h}u|\,|\Delta_{i,h}u|\,|\nabla\eta|\,\eta\,dx \\
&\le
C\left(
\varepsilon\int_{B_\rho}W_{i,h}^{p-2}\eta^2|\nabla\Delta_{i,h}u|^2\,dx
+\frac{1}{\varepsilon\rho^2}
\int_{B_\rho}W_{i,h}^{p-2}|\Delta_{i,h}u|^2\,dx
\right),
\end{align*}
where $C=C\!\left(n,p,\frac{\beta}{\alpha},\frac{L}{\nu}\right)>0$.
Choosing $\varepsilon>0$ sufficiently small and absorbing the first term into the left-hand side, we deduce
\begin{align}\label{eq:weighted-caccioppoli}
\int_{B_{\rho/2}} W_{i,h}^{\,p-2}\,\eta^{2}\,|\nabla \Delta_{i,h}u|^{2}\,dx
&\le \frac{C}{\rho^{2}}\int_{B_{\rho}} W_{i,h}^{\,p-2}\,|\Delta_{i,h}u|^{2}\,dx \notag\\
&\le \frac{C\,h^{2}}{\rho^{2}}\int_{B_{2\rho}} V^{p}\,dx,
\end{align}
where we have used the estimate
\[
\int_{B_{\rho}} W_{i,h}^{\,p-2}\,|\Delta_{i,h}u|^{2}\,dx
\le C\,h^{2}\Big(\int_{B_{\rho}} W_{i,h}^{\,p}\,dx
+\int_{B_{\rho}}|\delta_{i,h}u|^{p}\,dx\Big)
\le C\,h^{2}\int_{B_{2\rho}}V^{p}\,dx.
\]
Dividing \eqref{eq:weighted-caccioppoli} by $h^{2}$ we obtain
\begin{equation}\label{eq:weighted-caccioppoli-delta}
\int_{B_{\rho/2}} W_{i,h}^{\,p-2}\,\eta^{2}\,|\nabla \delta_{i,h}u|^{2}\,dx
\le \frac{C}{\rho^{2}}\int_{B_{2\rho}} V^{p}\,dx.
\end{equation}

\medskip
Since $\mu^{p-2}\le W_{i,h}^{p-2}$, estimate \eqref{eq:weighted-caccioppoli-delta} yields a uniform bound
for $\{\delta_{i,h}\nabla u\}_{|h|<\rho/2}$ in $L^{2}(B_{\rho/2})$.
By the standard characterization of Sobolev spaces via difference quotients
(see \cite[Lemma~8.2]{Giu} or \cite[Theorem~7.11]{GiaMar}), applied to $u$ and 
$\partial_j u$,
it follows that $\partial_i u\in W^{1,2}(B_{\rho/2})$ and that, up to a subsequence,
\[
\delta_{i,h}u \to \partial_i u \quad\text{strongly in } L^{2}(B_{\rho/2}),
\qquad
\nabla\delta_{i,h}u \to \nabla\partial_i u \quad\text{strongly in } L^{2}(B_{\rho/2})\; \text{ and a.e}.
\]

\medskip
We now pass to the limit in \eqref{eq:weighted-caccioppoli-delta}.
Since $W_{i,h}\to V$ a.e.\ in $B_{\rho/2}$, by Fatou's lemma we infer
\[
\int_{B_{\rho/2}} V^{p-2}\,\eta^{2}\,|\nabla\partial_i u|^{2}\,dx
\le \liminf_{h\to0}\int_{B_{\rho/2}} W_{i,h}^{\,p-2}\,\eta^{2}\,|\nabla \delta_{i,h}u|^{2}\,dx.
\]
Combining the previous inequality with \eqref{eq:weighted-caccioppoli-delta}, we get
\[
\int_{B_{\rho/2}} V^{p-2}\,\eta^{2}\,|\nabla\partial_i u|^{2}\,dx
\le \frac{C}{\rho^{2}}\int_{B_{2\rho}} V^{p}\,dx.
\]
Summing over $i\in\{1,\dots,n-1\}$ and recalling that $\eta\equiv 1$ on $B_{\rho/2}$ yields
\[
\int_{B_{\rho/2}} V^{p-2}\,|\nabla\nabla' u|^{2}\,dx
\le \frac{C}{\rho^{2}}\int_{B_{2\rho}} V^{p}\,dx,
\]
which proves \emph{(ii)} (up to the relabelling $\rho=2R$).

\medskip
Finally, since $\sigma$ is constant in each of the sets $B_r^\pm$, the same argument can be repeated
inside $B_r^\pm$ also for $i=n$, yielding $\partial_n u\in W^{1,2}_{\rm loc}(B_r^\pm)$.
Therefore $u\in W^{2,2}_{\rm loc}(B_r^+)$ and $u\in W^{2,2}_{\rm loc}(B_r^-)$, proving \emph{(i)}.
\end{proof}

In what follows we assume that $n>2$ and $2<p<2^*:=\frac{2n}{n-2}$. Since the computation leading to the desired estimate is rather lengthy, we divide the proof into several steps. We first establish the local boundedness of the tangential gradient
$\nabla' u$, without keeping track of the precise dependence of the resulting estimate on the constants and the exponents. This preliminary result allows us, in the subsequent propositions, to work with powers of the gradient
$\nabla' u$ in place of difference quotients, thereby simplifying the exposition and focusing on the improvement of the estimate and on the precise dependence on the constants.

{
\begin{Prop}
\label{LimDerTan}
Let $u$ be a minimizer of $\mathcal{F}$ in $B_r(x_0)$, $x_0\in \Gamma_r$. Then $\D_i u\in L^\infty(B_{\frac{\rho}{2}}(x_0))$, for every $\rho\in(0,r)$ and $i\in\{1,\dots,n-1\}$.
\end{Prop}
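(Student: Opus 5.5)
We will run a Moser iteration on the tangential difference quotients, as announced in the outline. Fix $i\in\{1,\dots,n-1\}$, $x_0=0$, and $|h|$ small. We test the Euler--Lagrange equation \eqref{EQEulLag} with
\[
\phi=\Delta_{i,-h}\bigl(\eta^2\,|\Delta_{i,h}u|^{m-2}\Delta_{i,h}u\bigr),\qquad m\ge 2,
\]
where $\eta\in C^\infty_c(B_s)$ is a cutoff between concentric balls $B_t\subset B_s\subset B_\rho$ with $|\nabla\eta|\le C/(s-t)$. As in the proof of Theorem~\ref{DS}, $\Delta_{i,h}\sigma=0$ because $\sigma$ depends only on $x_n$. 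The test function is admissible because $u$ is locally bounded (H\"older continuity theorem), so $\Delta_{i,h}u\in L^\infty_{loc}$; if needed, one can truncate $|\Delta_{i,h}u|$ at level $k$ and let $k\to\infty$.

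\textbf{Caccioppoli step.} We use the representation \eqref{eq16} together with the bounds \eqref{eq17}--\eqref{eq18}. With $W=W_{i,h}$, this gives
\[
(m-1)\int W^{p-2}\eta^2|\Delta_{i,h}u|^{m-2}|\nabla\Delta_{i,h}u|^2
\le C\int W^{p-2}\eta|\nabla\eta|\,|\Delta_{i,h}u|^{m-1}|\nabla\Delta_{i,h}u|.
\]
After Young's inequality and absorption, and writing $Z=|\Delta_{i,h}u|^{m/2}$, we obtain
\[
\int W^{p-2}\eta^2|\nabla Z|^2\le C\,m^{2}\!\int W^{p-2}|\nabla\eta|^2 Z^2 .
\]
Since $W\ge\mu$ and $p>2$, the left side dominates $\mu^{p-2}\int\eta^2|\nabla Z|^2$. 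Sobolev's inequality therefore gives
\[
\Bigl(\int_{B_t} Z^{2^*}\Bigr)^{2/2^*}\le \frac{C m^2}{\mu^{p-2}(s-t)^2}\int_{B_s} W^{p-2}Z^2 .
\]
The constant is allowed to depend on $\mu$, consistent with the remark that this proposition is only preliminary.

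\textbf{Main obstacle: the weight $W^{p-2}$ on the right.} It is only known to lie in $L^{p/(p-2)}$, or slightly better in $L^{sp/(p-2)}$ by Theorem~\ref{HigherInt}, uniformly in $h$. We apply H\"older's inequality:
\[
\int W^{p-2}Z^2\le \|W\|_{L^{q(p-2)}(B_s)}^{p-2}\,\|Z\|^2_{L^{2q'}(B_s)},\qquad q=\frac{sp}{p-2}.
\]
The iteration gains if $2q'<2^*$, that is $1/q'>(n-2)/n$, equivalently $1/q<2/n$, i.e.
\[
\frac{p-2}{sp}<\frac2n .
\]
This holds already for $s=1$ exactly when $p<2^*$, so here the restriction $2<p<\frac{2n}{n-2}$ is used. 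Setting $\chi=2^*/(2q')>1$, we get the reverse-H\"older-type chain
\[
\|\Delta_{i,h}u\|_{L^{m\chi q'}(B_t)}\le \Bigl(\frac{Cm^2 K}{(s-t)^2}\Bigr)^{1/m}\|\Delta_{i,h}u\|_{L^{mq'}(B_s)},
\]
where $K=\mu^{2-p}\|W\|^{p-2}_{L^{q(p-2)}(B_\rho)}\le C(\mu)\bigl(\int_{B_{2\rho}}V^p\bigr)^{(p-2)/p}$, independent of $h$. We iterate with exponents $m_k=m_0\chi^k$ and radii $r_k=\rho/2+\rho 2^{-k-1}$. The product of the constants converges because $\sum_k \chi^{-k}\log(C\chi^{2k}4^k)<\infty$. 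This yields
\[
\sup_{B_{\rho/2}}|\Delta_{i,h}u|\le C(\mu,\rho,u)\,\|\Delta_{i,h}u\|_{L^{2q'}(B_\rho)}.
\]

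\textbf{Conclusion.} We divide by $|h|$. The right side is bounded uniformly in $h$ by $\|\partial_iu\|_{L^{2q'}}$, since $2q'<2^*$ and $\partial_i u\in W^{1,2}_{loc}$ by Theorem~\ref{DS}, hence $\partial_iu\in L^{2^*}_{loc}$ (alternatively, start from $2q'\le p$). Thus $\|\delta_{i,h}u\|_{L^\infty(B_{\rho/2})}\le C$ uniformly in $h$. Since $\delta_{i,h}u\to\partial_iu$ a.e. along a subsequence, it follows that $\partial_iu\in L^\infty(B_{\rho/2})$. Radii slightly shrinking from $\rho$ to the final ball are handled by applying the argument on $B_\rho$ with $\rho<r$.
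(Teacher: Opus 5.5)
Your proposal is correct and follows essentially the same route as the paper: a weighted Caccioppoli inequality for powers of the tangential difference quotient $\Delta_{i,h}u$ (your exponent $m$ plays the role of the paper's $\tilde m=2m/p$), Sobolev's inequality using $\mu^{p-2}\le W_{i,h}^{p-2}$, H\"older's inequality against $W_{i,h}\in L^p$ (which gains exactly when $p<2^*$), Moser iteration, and finally division by $h$. Your optional use of higher integrability ($s>1$) is not needed, since the paper works with $s=1$; it changes only the bookkeeping, not the argument.
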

\begin{proof}
Without loss of generality, we can assume that $x_0=0$. We divide the proof in three steps.
\vspace{3mm}\\
\indent
\textbf{Step 1:} \textit{A Caccioppoli type inequality combined with Sobolev--Poincar\'e inequality.}
\\ We start writing an equation involving $u$ and $\Delta_{i,h}u$. We show that the following equation holds:
\begin{equation}
\label{eq1}
    \int_{B_{\rho-|h|}}\sigma(x)\,\prodscal{A(\D u_{h,\ee_i})-A(\D u)}{\D\phi}\,dx=0,\quad\forall \phi\in W^{1,p}_0(B_{\rho-|h|}).
\end{equation}
For any \(\phi\in W^{1,p}_0(B_{\rho-|h|})\), the translated function
\(\phi_{-h,\ee_i}\in W^{1,p}_0(B_\rho)\) can be used in the
Euler--Lagrange equation \eqref{EQEulLag}. Being $\Delta_{i,h}\sigma (x)=0$, a tangential translation then yields
\[
\int_{B_{\rho-|h|}}\sigma(x)\,\langle A(\nabla u_{h,\ee_i}),\nabla\phi\rangle\,dx=0.
\]
Subtracting the same identity with \(A(\nabla u)\), we obtain
\[
\int_{B_{\rho-|h|}}\sigma(x)\,\langle A(\nabla u_{h,\ee_i})-A(\nabla u),\nabla\phi\rangle\,dx=0,
\]
that is, \eqref{eq1}.\\
\indent Now we choose a suitable test function in \eqref{eq1}.
For any $m\geq p$ we denote $\tilde{m}:=\frac{2m}{p}\geq 2$ and define $\psi=\eta^2|\Delta_{i,h}u|^{\tilde{m}-2}\Delta_{i,h}u\in W^{1,p}_0(B_{\rho-|h|})$. It holds that
\begin{equation*}
\D\psi=2\eta|\Delta_{i,h}u|^{\tilde{m}-2}\Delta_{i,h}u\D\eta+(\tilde{m}-1)\eta^2|\Delta_{i,h}u|^{\tilde{m}-2}\D(\Delta_{i,h}u).
\end{equation*}
Plugging $\psi$ in \eqref{eq1}, we get
\begin{align*}
    &(\tilde{m}-1)\int_{B_{\rho-|h|}}\sigma(x)\,\eta^2|\Delta_{i,h}u|^{\tilde{m}-2}\prodscal{A(\D u_{h,\ee_i})-A(\D u)}{\D(\Delta_{i,h}u)}\,dx\\
    & =-2\int_{B_{\rho-|h|}}\sigma(x)\,\eta|\Delta_{i,h}u|^{\tilde{m}-2}\Delta_{i,h} u\prodscal{A(\D u_{h,\ee_i})-A(\D u)}{\D\eta}\,dx.
\end{align*}
Setting
\begin{equation*}
    W_{i,h}=(\mu^2+|\D u|^2+|\D u_{h,\ee_i}|^2)^{\frac{1}{2}},
\end{equation*}
and observing that $\tilde{m}-1\geq 1$, we apply \eqref{Monotonicity}, \eqref{Growth}, Holder's and Young's inequalities and infer that
\begin{align}
    & \alpha\nu\int_{B_{\rho-|h|}}W^{p-2}_{i,h}\eta^2|\Delta_{i,h}u|^{\tilde{m}-2}|\D(\Delta_{i,h}u)|^2\,dx\nonumber\\
    & \leq 2\beta L\int_{B_{\rho-|h|}}\eta|\Delta_{i,h}u|^{\tilde{m}-1}W_{i,h}^{p-2}|\D(\Delta_{i,h}u)||\D\eta|\,dx\nonumber\\
    & \leq 2\beta L\bigg[\frac{\varepsilon}{2}\int_{B_{\rho-|h|}}W_{i,h}^{p-2}\eta^2|\Delta_{i,h}u|^{\tilde{m}-2}|\D(\Delta_{i,h}u)|^2\,dx+\frac{1}{2\varepsilon}\int_{B_{\rho-|h|}}W_{i,h}^{p-2}|\Delta_{i,h}u|^{\tilde{m}}|\D\eta|^2\,dx \bigg],
\end{align}
for any $\varepsilon>0$.
Choosing
\[
\varepsilon:=\frac{\alpha\nu}{2\beta L},
\]
so that $\alpha\nu-\beta L\varepsilon=\alpha\nu/2$, we can absorb the first term on the right-hand side into the left-hand side and obtain
\begin{equation}
\label{eq12}
    \int_{B_{\rho-|h|}} W_{i,h}^{p-2}\eta^2
    |\Delta_{i,h}u|^{\tilde m-2}|\nabla(\Delta_{i,h}u)|^2\,dx
    \le
    C\!\left(\frac{\beta}{\alpha},\frac{L}{\nu}\right)
    \int_{B_{\rho-|h|}} W_{i,h}^{p-2}
    |\Delta_{i,h}u|^{\tilde m}|\nabla\eta|^2\,dx,
\end{equation}
where
\[
C = 4\left(\frac{\beta}{\alpha}\right)^2\left(\frac{L}{\nu}\right)^2.
\]
Defining
\begin{equation*}
Z:=|\Delta_{i,h}u|^{\frac{m}{p}},
\end{equation*}
so that
\begin{equation*}
    |\D Z|^2=\bigg(\frac{m}{p}\bigg)^2|\Delta_{i,h} u|^{\tilde{m}-2}|\D(\Delta_{i,h} u)|^2,
\end{equation*}
using \eqref{eq12} we get a Caccioppoli type inequality for $Z$,
\begin{align*}
    \int_{B_{\rho-|h|}}W_{i,h}^{p-2}|\D(\eta Z)|^2\,dx
    & \leq 2\int_{B_{\rho-|h|}}W_{i,h}^{p-2}|\D\eta|^2Z^2\,dx+2\int_{B_{\rho-|h|}}W_{i,h}^{p-2}|\D Z|^2\eta^2\,dx\\
    & \leq C\bigg(p,\frac{\beta}{\alpha},\frac{L}{\nu}\bigg)\big(1+m^2\big)\int_{B_{\rho-|h|}}W_{i,h}^{p-2}|\D\eta|^2Z^2\,dx.
\end{align*}
We then combine this inequality with the Sobolev--Poincar\'e inequality.
Let us fix $\frac{\rho}{2}\leq \rho''<\rho'\leq\rho$. We choose $\eta\in C^1_c(B_{\rho'})$ such that $\eta=1$ on $B_{\rho''}$ and $|\D\eta|\leq \frac{C}{\rho'-\rho''}$, for some positive constant $C$.
Applying the Sobolev--Poincar\'e inequality, remarking that $\mu^{p-2}\leq W^{p-2}_{i,h}$ and making use of H\"older's inequality, we get
\begin{align*}
    \bigg(\dashint_{B_{\rho''}}|Z|^{2^*}\,dx\bigg)^{\frac{1}{2^*}}
    & \leq \bigg(\dashint_{B_{\rho'}}|\eta Z|^{2^*}\,dx\bigg)^{\frac{1}{2^*}}\leq C(n)\rho'\bigg(\dashint_{B_{\rho'}}|\D(\eta Z)|^2\,dx\bigg)^{\frac{1}{2}}\\
    & \leq C(n)\mu^{\frac{2-p}{2}}\rho'\bigg(\dashint_{B_{\rho'}}W^{p-2}_{i,h}|\D(\eta Z)|^2\,dx\bigg)^{\frac{1}{2}}\\
    & \leq C\bigg(n,p,\frac{\alpha}{\beta},\frac{\nu}{L}\bigg)\big(1+m^2\big)^{\frac{1}{2}}\mu^{\frac{2-p}{2}}\rho' \bigg(\dashint_{B_{\rho'}}W_{i,h}^{p-2}|\D\eta|^2Z^2\,dx\bigg)^{\frac{1}{2}}\\
    & \leq C\bigg(n,p,\frac{\alpha}{\beta},\frac{\nu}{L}\bigg)\big(1+m^2\big)^{\frac{1}{2}}\mu^{\frac{2-p}{2}}\frac{\rho'}{\rho'-\rho''} \bigg(\dashint_{B_{\rho'}}W_{i,h}^{p-2}Z^2\,dx\bigg)^{\frac{1}{2}}\\
    & \leq C\bigg(n,p,\frac{\alpha}{\beta},\frac{\nu}{L}\bigg)\big(1+m^2\big)^{\frac{1}{2}}\mu^{\frac{2-p}{2}}\frac{\rho'}{\rho'-\rho''} \bigg(\dashint_{B_{\rho'}}W_{i,h}^p\,dx\bigg)^{\frac{p-2}{2p}}\bigg(\dashint_{B_{\rho'}}Z^p\,dx\bigg)^{\frac{1}{p}}.
\end{align*}
Since
\begin{align*}
\bigg(\dashint_{B_{\rho'}}W_{i,h}^p\,dx\bigg)^{\frac{p-2}{2p}}\leq C(n)\bigg(\frac{\rho}{\rho'}\bigg)^n\Phi_p(\rho)^{\frac{p-2}{2p}},
\end{align*}
where
\begin{equation*}
\Phi_p(\rho):=\dashint_{B_\rho}\big(1+|\D u|^2\big)^{\frac{p}{2}}\,dx,
\end{equation*}
we infer that
\begin{align}
    \label{eq13}
\bigg(\dashint_{B_{\rho''}}|Z|^{2^*}\,dx\bigg)^{\frac{1}{2^*}}\leq C\bigg(n,p,\frac{\beta}{\alpha},\frac{L}{\nu}\bigg)\big(1+m^2\big)^{\frac{1}{2}}\mu^{\frac{2-p}{2}}\frac{\rho'}{\rho'-\rho''}\bigg(\frac{\rho}{\rho'}\bigg)^n\Phi_p(\rho)^{\frac{p-2}{2p}}\bigg(\dashint_{B_{\rho'}}Z^p\,dx\bigg)^{\frac{1}{p}}.
\end{align}

\indent\textbf{Step 2:} \textit{Obtaining the starting inequality for Moser's iteration. } Let $\chi=\frac{2^*}{p}>1$ and
\begin{equation}
    \rho_k=\rho\bigg(\frac{1}{2}+\frac{1}{2^{k+1}}\bigg), \quad\forall k\in\N_0.
\end{equation}
We remark that $\{\rho_k\}_{k\in\N_0}$ is a decreasing sequence such that $\rho_k\rightarrow{\frac{\rho}{2}}$. We set $m_0=p$ and
\begin{equation}
    m_k=p\chi^k,\quad\forall k\in\N_0,
\end{equation}
so that $m_{k+1}=\chi m_k$, $\{m_k\}_{k\in\N_0}$ is an increasing sequence and $m_k\rightarrow{+\infty}$. Furthermore, we define
\begin{equation}
Y_k=\bigg(\dashint_{B_{\rho_k}}|\Delta_{i,h}u|^{m_k}\,dx\bigg)^{\frac{1}{m_k}},\quad\forall k\in\N_0.
\end{equation}
Applying \eqref{eq13} with $m=m_k$, $Z=|\Delta_{i,h}u|^{\frac{m_k}{p}}$ (so that $Z^{2^*}=|\Delta_{i,h}u|^{m_{k+1}}$), $\rho'=\rho_k$ and $\rho''=\rho_{k+1}$, we deduce that
\begin{equation*}
    Y_{k+1}^{\frac{m_{k+1}}{2^*}}\leq \Lambda_k\Phi_p(\rho)^{\frac{p-2}{2p}}Y_k^{\frac{m_k}{p}},\quad\text{where }\Lambda_k=C\bigg(n,p,\frac{\beta}{\alpha},\frac{L}{\nu}\bigg)\big(1+m_k^2\big)^{\frac{1}{2}}\mu^{\frac{2-p}{2}}\frac{\rho'}{\rho'-\rho''}\bigg(\frac{\rho}{\rho'}\bigg)^n,
\end{equation*}
for every $k\in\N_0$. Raising to the power $\frac{2^*}{m_{k+1}}=\frac{p}{m_k}=\frac{1}{\chi^k}$, we get
\begin{equation}
\label{eq14}
    Y_{k+1}\leq \Lambda_k^{\frac{1}{\chi^k}}\Phi_p(\rho)^{\frac{p-2}{2p}\frac{1}{\chi^k}}Y_k,\quad\forall k\in\N_0.
\end{equation}

\indent\textbf{Step 3:} \textit{Moser's iteration. }
Iterating \eqref{eq14} $N$ times starting from $k=0$, we get
\begin{equation}
    Y_{N}\leq \Bigg(\prod_{k=0}^{N-1}\Lambda_k^{\frac{1}{\chi^k}}\Bigg)\Phi_p(\rho)^{\frac{p-2}{2p}\sum_{k=0}^{N-1}\frac{1}{\chi^k}}Y_0.
\end{equation}

Letting $N\to+\infty$, we estimate the infinite product
\[
\prod_{k=0}^{\infty}\Lambda_k^{1/\chi^k}
= \exp\!\left(\sum_{k=0}^{\infty}\frac{1}{\chi^k}\log \Lambda_k\right).
\]
Recall that
\[
\Lambda_k
= C_0\,\mu^{\frac{2-p}{2}}\,(1+m_k^2)^{1/2}\,
\frac{\rho_k}{\rho_k-\rho_{k+1}}\left(\frac{\rho}{\rho_k}\right)^{n},
\qquad m_k=p\chi^k,
\]
where $C_0=C_0\!\left(n,p,\frac{\beta}{\alpha},\frac{L}{\nu}\right)$.
Since $\rho_k=\rho(\frac12+\frac{1}{2^{k+1}})$, we have
\[
\rho_k-\rho_{k+1}=\frac{\rho}{2^{k+2}}
\quad\Longrightarrow\quad
\frac{\rho_k}{\rho_k-\rho_{k+1}}\le c\,2^{k},
\qquad
\left(\frac{\rho}{\rho_k}\right)^n\le c,
\]
for a constant $c=c(n)>0$ independent of $k$. Hence, for a new constant
$C_1=C_1\!\left(n,p,\frac{\beta}{\alpha},\frac{L}{\nu}\right)$,
\[
\Lambda_k \le C_1\,\mu^{\frac{2-p}{2}}\,(1+m_k^2)^{1/2}\,2^{k}.
\]
Taking logarithms yields
\[
\log\Lambda_k \le \log C_1+\frac{2-p}{2}\log\mu+\frac12\log(1+m_k^2)+k\log 2.
\]
Therefore
\[
\sum_{k=0}^{\infty}\frac{1}{\chi^k}\log\Lambda_k
\le
\Big(\log C_1+\tfrac{2-p}{2}\log\mu\Big)\sum_{k=0}^{\infty}\frac{1}{\chi^k}
+\frac12\sum_{k=0}^{\infty}\frac{\log(1+m_k^2)}{\chi^k}
+(\log 2)\sum_{k=0}^{\infty}\frac{k}{\chi^k}.
\]
Since $\chi>1$, the geometric series $\sum_{k\ge0}\chi^{-k}$ converges and
$\sum_{k\ge0}k\,\chi^{-k}$ converges as well. Moreover, using $m_k=p\chi^k$ we have
\[
\log(1+m_k^2)\le \log(2p^2)+2k\log\chi,
\]
hence $\sum_{k\ge0}\chi^{-k}\log(1+m_k^2)<\infty$.
It follows that
\[
\sum_{k=0}^{\infty}\frac{1}{\chi^k}\log\Lambda_k <\infty,
\qquad\text{and thus}\qquad
\prod_{k=0}^{\infty}\Lambda_k^{1/\chi^k}\le C,
\]
where $C=C\!\left(n,p,\frac{\beta}{\alpha},\frac{L}{\nu},\mu\right)>0$.

and computing
\begin{equation*}
    \Phi_p(\rho)^{\frac{p-2}{2p}\sum_{k=0}^{+\infty}\frac{1}{\chi^k}}=\Phi_p(\rho)^{\frac{p-2}{2p}\frac{\chi}{\chi-1}},
\end{equation*}
we obtain
\begin{equation}
    \norm{\Delta_{i,h}u}_{L^\infty(B_{\frac{\rho}{2}})}\leq C\Phi_p(\rho)^{\frac{p-2}{2p}\frac{\chi}{\chi-1}}\norm{\Delta_{i,h}u}_{L^p(B_{\rho})}.
\end{equation}
Dividing the previous inequality by $h$ and letting $h\rightarrow 0$, we conclude that
\begin{align*}
    \norm{\D_i u}_{L^\infty(B_{\frac{\rho}{2}})}
    &\leq C\Phi_p(\rho)^{\frac{p-2}{2p}\frac{\chi}{\chi-1}}\norm{\D_i u}_{L^p(B_{\rho})}
    \leq C\Phi_p(\rho)^{\frac{p-2}{2p}\frac{\chi}{\chi-1}}\bigg(\dashint_{B_\rho}(\mu^2+|Du|^2)^{\frac p2}\,dx\bigg)^{\frac1p}\\
    &\leq C\Phi_p(\rho)^{\frac{p-2}{2p}\frac{\chi}{\chi-1}+\frac{1}{p}},
\end{align*}
which is the thesis.
\end{proof}
Having obtained local boundedness for $\nabla'u$, we can now proceed by using an appropriate test function to derive a uniform boundedness estimate for $\nabla'u$ independent of $\mu$.
}

\begin{Lem}
\label{Prop1Dec}
Let $u$ be a minimizer of $\mathcal{F}$ in $B_r(x_0)$, $x_0 \in \Gamma_r$. Then, setting $\chi=\frac{2^*}{p}$, for every $i\in\{1,\dots,n-1\}$ it holds that
\begin{equation*}
    \sup_{B_\frac{\rho}{2}(x_0)}|\D_i u|\leq C\bigg(\dashint_{B_\rho(x_0)}\big( {\mu^2}+|\D u|^2\big)^{\frac{p}{2}}\,dx\bigg)^{\frac{p-2}{2p}\frac{\chi}{\chi-1}+\frac{1}{p}},
\end{equation*}
for every $\rho\in\big(0,\frac{r}{2}\big)$ and for some positive constant $C=C\big(n,p,\frac{\beta}{\alpha},\frac{L}{\nu}\big)$ independent of $\mu$.
\end{Lem}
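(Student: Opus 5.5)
The plan is to redo the Moser iteration of Proposition~\ref{LimDerTan}. Now that $\partial_i u$ is known to be bounded and to lie in $W^{1,2}_{loc}(B_r)$ by Theorem~\ref{DS}, I would work directly with powers of $\partial_i u$ instead of difference quotients. The factor $\mu^{\frac{2-p}{2}}$ entered Step~1 only through the crude bound $\mu^{p-2}\le W_{i,h}^{p-2}$ used before Sobolev--Poincar\'e. That step must be replaced by an argument in which the weight $V^{p-2}$ is genuinely used.

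\textbf{Caccioppoli inequality.} I would pass to the limit $h\to0$ in \eqref{eq12}, which is legitimate since $\partial_i u\in L^\infty$ and $\nabla\partial_i u\in L^2$. For $i\le n-1$ and $\tilde m=2m/p$ this gives
\[
\int V^{p-2}\eta^2|\partial_iu|^{\tilde m-2}|\nabla\partial_iu|^2\,dx\le C\int V^{p-2}|\partial_iu|^{\tilde m}|\nabla\eta|^2\,dx .
\]
I would then set $Z:=|\partial_iu|^{m/p}$, which Theorem~\ref{DS} together with Proposition~\ref{LimDerTan} makes an admissible $W^{1,2}$ function. The key observation is the pointwise bound $V^{p-2}\ge|\partial_iu|^{p-2}$. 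It lets me replace the weight on the left by a power of $\partial_iu$ itself:
\[
\int\eta^2|\partial_iu|^{\tilde m+p-4}|\nabla\partial_iu|^2\,dx\le C\int V^{p-2}|\partial_iu|^{\tilde m}|\nabla\eta|^2\,dx .
\]
In terms of $\hat Z:=|\partial_iu|^{(m+p-2)/p\cdot\,p/2}$, suitably normalized, this is an unweighted bound on $\int|\nabla(\eta\hat Z)|^2$, with constant polynomial in $m$ and independent of $\mu$. Sobolev--Poincar\'e then bounds the $L^{2^*}$ norm of $\hat Z$. On the right I would apply H\"older with exponents $p/(p-2)$ and $p/2$ to $V^{p-2}\cdot|\partial_iu|^{\tilde m}$, which produces $\Phi_p(\rho)^{(p-2)/p}$ times an $L^{m}$ norm of $\partial_iu$. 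As in Step~1, the restriction $p<2^*$ is what gives a gain $\chi=2^*/p>1$ in integrability.

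\textbf{Iteration.} The exponents now shift by $p-2$ at each step, since the left side carries $m+p-2$ while the right carries $m$. I would therefore define $m_{k+1}=\chi(m_k+p-2)$, or absorb the shift by Young's inequality when $|\partial_iu|\le1$ versus $\ge1$; the simplest route is to iterate on $\max\{1,|\partial_iu|\}$. The exponent gain again makes $\sum\chi^{-k}\log\Lambda_k$ finite, now with $\Lambda_k$ free of $\mu$. Bookkeeping the powers of $\Phi_p(\rho)$ should reproduce exactly the exponent $\frac{p-2}{2p}\frac{\chi}{\chi-1}+\frac1p$, with $Y_0$ bounded by $\Phi_p(\rho)^{1/p}$. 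To get the homogeneous quantity $\mu^2+|\nabla u|^2$ instead of $1+|\nabla u|^2$, I would first prove the estimate for a rescaled function $u/\lambda$ with $\lambda^p=\dashint(\mu^2+|\nabla u|^2)^{p/2}$. This rescaling is admissible precisely because the constants are now $\mu$-uniform, with $\mu/\lambda\le1$.

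\textbf{Main obstacle.} The hardest step is making the $\mu$-free Sobolev step rigorous. One must check that the exponents close up: the shifted recursion $m_{k+1}=\chi(m_k+p-2)$ must still give $\Phi_p$ appearing with total power $\frac{p-2}{2p}\sum\chi^{-k}$, and the degenerate set $\{\partial_iu=0\}$ must be harmless. I would first try the truncation $\max\{|\partial_iu|,\kappa\}$ with $\kappa$ a multiple of $\Phi_p(\rho)^{1/p}$, so that each power of $|\partial_i u|$ can be traded against $\Phi_p$ at fixed cost. The boundedness from Proposition~\ref{LimDerTan} guarantees that every integral in the iteration is finite.
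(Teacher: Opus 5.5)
Your route is different from the paper's, and it works; in fact it proves more than the lemma, though not the exponent you expect.

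\textbf{How the two routes differ.} The paper uses the weight only at level one. With $Z=|\partial_i u|^{m/p}$ it passes to $Z_1=(Z-1)_+$, notes that $V^{p-2}\ge 1$ on $\{|\partial_i u|>1\}$, and runs a linear Moser iteration on $Y_k=1+|\partial_i u|^{m_k/p}$ with $m_{k+1}=\chi m_k$. You use the full inequality $V^{p-2}\ge|\partial_i u|^{p-2}$. This bounds both $\eta^2|\nabla\hat Z|^2$ and $|\nabla\eta|^2\hat Z^2$ by the weighted right-hand side, where $\hat Z=|\partial_i u|^{\frac mp+\frac{p-2}{2}}$. That is the correct exponent: $\tilde m+p-2$ halves to $\frac mp+\frac{p-2}{2}$, not $\frac{m+p-2}{2}$. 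Sobolev and H\"older then give
\[
N_{k+1}\le\Big(\Lambda_k\,\Phi_p(\rho)^{\frac{p-2}{2p}}\Big)^{\frac{p}{m_k+s}}\,N_k^{\frac{m_k}{m_k+s}},\qquad s=\tfrac{p(p-2)}{2},\quad m_{k+1}=\chi(m_k+s),
\]
with $N_k=\big(\dashint_{B_{\rho_k}}|\partial_i u|^{m_k}\big)^{1/m_k}$, $\Lambda_k\le C\,2^k\sqrt{1+m_k^2}$, and $C$ free of $\mu$. So the shift is $\frac{p(p-2)}{2}$, not $p-2$. The zero set of $\partial_i u$ is harmless, since the power in $\hat Z$ exceeds $1$.

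\textbf{What the recursion actually gives.} The bookkeeping will not reproduce $\gamma:=\frac{p-2}{2p}\frac{\chi}{\chi-1}+\frac1p$. Your recursion is invariant under $u\mapsto\lambda u$, $\mu\mapsto\lambda\mu$. Put $\lambda=\Phi_p(\rho)^{1/p}$ and $M_k=N_k/\lambda$. Then
\[
\max\{1,M_{k+1}\}\le (C\Lambda_k)^{p/(m_k+s)}\max\{1,M_k\},\qquad M_0\le 1,
\]
so $\sup_{B_{\rho/2}}|\partial_i u|\le C\,\Phi_p(\rho)^{1/p}$. This is Proposition~\ref{FinaLip} in a single step. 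Consequently, iterating on $\max\{1,|\partial_i u|\}$, truncating at $\kappa$, and rescaling at the end are all unnecessary; the first two would only reintroduce the inhomogeneity your route avoids.

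The homogeneous bound implies the displayed one when $\Phi_p(\rho)\ge 1$. For $\Phi_p(\rho)<1$ the displayed bound is strictly stronger, and it is false uniformly in $\mu$. Take $F(\xi)=(\mu^2+|\xi|^2)^{p/2}$ and $\mu=\varepsilon$. The affine function $u=\varepsilon x_1$ is a minimizer, since its flux is orthogonal to $e_n$. Its left side is $\varepsilon$, while the right side is of order $\varepsilon^{p\gamma}$ with $p\gamma=1+\frac{p-2}{2}\frac{\chi}{\chi-1}>1$.

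The paper's proof hides the same inhomogeneity. Its final factor is $\big(\dashint Y_0^p\big)^{1/p}\le C\big(1+\Phi_p(\rho)^{1/p}\big)$. Also, the constant $1$ coming from $Y\le 2+Z_1$ is absorbed in \eqref{eq:RH}, which fails for $u$ constant and $\mu$ small. This is harmless only because the lemma is applied at $\Phi_p=1$ in Proposition~\ref{FinaLip}.

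\textbf{Trade-off.} The paper's level-one truncation buys a linear, textbook recursion at the cost of scale invariance, which it then restores by rescaling. Your use of the degeneracy keeps scale invariance from the start. The cost is a mildly nonlinear recursion with exponents $\frac{m_k}{m_k+s}<1$, handled by the normalization above.
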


\begin{proof}
Let us fix $i\in\{1,\dots,n-1\}$ and $\rho<r$. Without loss of generality we may assume that $x_0=0$.
Let $m\ge p$ and set $\tilde m:=\frac{2m}{p}\ge2$.
Reasoning as in Proposition~\ref{LimDerTan}, we obtain the inequality \eqref{eq12},
which we recall here for the reader’s convenience:
\begin{equation}
    \int_{B_{\rho-|h|}} W_{i,h}^{p-2}\eta^2
    |\Delta_{i,h}u|^{\tilde m-2}|\nabla(\Delta_{i,h}u)|^2\,dx
    \le
    C\!\left(\frac{\beta}{\alpha},\frac{L}{\nu}\right)
    \int_{B_{\rho-|h|}} W_{i,h}^{p-2}
    |\Delta_{i,h}u|^{\tilde m}|\nabla\eta|^2\,dx .
\end{equation}

By the boundedness of $\partial_i u$ proved in Proposition~\ref{LimDerTan},
we can pass to the limit as $h\to0$ in the previous inequality. More precisely, by Proposition~\ref{LimDerTan} and Theorem~\ref{DS}\,(ii),
for every $i\in\{1,\dots,n-1\}$ we have, as $h\to0$,
\[
\delta_{i,h}u\to \partial_i u \quad \text{a.e.\ in } B_\rho,
\qquad
\delta_{i,h}u\to \partial_i u \quad \text{in } L^2_{\mathrm{loc}}(B_\rho),
\]
and moreover
\[
\nabla(\delta_{i,h}u)\to \nabla\partial_i u
\quad \text{in } L^2_{\mathrm{loc}}(B_\rho).
\]
Since $\partial_i u\in L^\infty_{\mathrm{loc}}(B_\rho)$ and $V^{p-2}\le 1+V^p\in L^1_{\mathrm{loc}}(B_\rho)$,
the right-hand side integrand is dominated by an $L^1_{\mathrm{loc}}$ function, hence dominated convergence applies.
Therefore, by weak lower semicontinuity on the left-hand side and dominated convergence on the right-hand side,
we may pass to the limit in \eqref{eq12} as $h\to0$, obtaining
\begin{equation}\label{eq3}
    \int_{B_\rho}V^{p-2}\eta^2|\D_i u|^{\tilde{m}-2}|\D\D_i u|^2\,dx
    \le
    C\int_{B_\rho}V^{p-2}|\D_i u|^{\tilde{m}}|\D\eta|^2\,dx .
\end{equation}

We now introduce the notation
\[
Z:=|\D_i u|^{\frac{m}{p}},
\]
which is well defined in $B_{\rho/2}$ by Proposition~\ref{LimDerTan}.
Moreover, by Theorem~\ref{DS} we have
\[
|\D Z|^2=\bigg(\frac{m}{p}\bigg)^2|\D_i u|^{\tilde m-2}|\D\D_i u|^2
\quad \text{a.e.\ in } B_{\rho/2}.
\]
With this notation, inequality \eqref{eq3} can be rewritten as
\begin{equation}\label{EstZ}
    \int_{B_\rho}V^{p-2}\eta^2|\D Z|^2\,dx
    \le
    C m^2\int_{B_\rho}V^{p-2}|\D\eta|^2Z^2\,dx ,
\end{equation}
where $C=C\big(p,\frac{\beta}{\alpha},\frac{L}{\nu}\big)>0$.

\medskip
\noindent Now we want to obtain the starting inequality for Moser's iteration.
Define
\[
Y:=1+Z,
\qquad
\Phi_p(s):=\dashint_{B_s}\big(\mu^2+|Du|^2\big)^{\frac p2}\,dx .
\]
Fix $\frac{\rho}{2}\le \rho''<\rho'\le \rho$ and choose
$\eta\in C_c^1(B_{\rho'})$ such that $\eta\equiv1$ on $B_{\rho''}$ and
$|\nabla\eta|\le c(\rho'-\rho'')^{-1}$.
Set $Z_1:=(Z-1)_+$. Then $Z_1\le Z$ and $\nabla Z_1=\nabla Z$ a.e.\ on $\{Z>1\}$.
Moreover,
\[
\{Z>1\}=\{|\D_i u|>1\}\subset\{|Du|>1\}.
\]
Since $p>2$, this implies $V\ge1$ and hence $V^{p-2}\ge1$ on $\operatorname{spt}Z_1$.
Using this observation and \eqref{EstZ}, we obtain
\begin{align*}
\int_{B_{\rho'}}|\nabla(\eta Z_1)|^2\,dx
&\le 2\int_{B_{\rho'}}|\nabla\eta|^2 Z_1^2\,dx
   +2\int_{B_{\rho'}}\eta^2|\nabla Z_1|^2\,dx \\
&\le 2\int_{B_{\rho'}}V^{p-2}|\nabla\eta|^2 Z^2\,dx
   +2\int_{B_{\rho'}}V^{p-2}\eta^2|\nabla Z|^2\,dx \\
&\le C(1+m^2)\int_{B_{\rho'}}V^{p-2}|\nabla\eta|^2 Z^2\,dx .
\end{align*}
By Sobolev's inequality and H\"older's inequality we deduce
\begin{align*}
\bigg(\dashint_{B_{\rho'}}|\eta Z_1|^{2^*}\,dx\bigg)^{\frac1{2^*}}
&\le C\,\rho'
\bigg(\dashint_{B_{\rho'}}|\nabla(\eta Z_1)|^2\,dx\bigg)^{\frac12} \\
&\le C\,\frac{\rho'}{\rho'-\rho''}\sqrt{1+m^2}\,
\Phi_p(\rho')^{\frac{p-2}{2p}}
\bigg(\dashint_{B_{\rho'}} Z^{p}\,dx\bigg)^{\frac1p}.
\end{align*}
Finally, since $Y\le 2+Z_1$ and $Y\ge1$, we have
$Y^{2^*}\le C(1+Z_1^{2^*})$ and $Y^p\le C(1+Z^p)$.
Using also the standard scaling inequality
\[
\bigg(\dashint_{B_{\rho''}} f^{2^*}\,dx\bigg)^{\frac1{2^*}}
\le C\Big(\frac{\rho'}{\rho''}\Big)^{\frac{n-2}{2}}
\bigg(\dashint_{B_{\rho'}} (\eta f)^{2^*}\,dx\bigg)^{\frac1{2^*}},
\qquad f\ge0,
\]
we conclude that
\begin{equation}\label{eq:RH}
\bigg(\dashint_{B_{\rho''}} Y^{2^*}\,dx\bigg)^{\frac1{2^*}}
\le
C \Big(\frac{\rho'}{\rho''}\Big)^{\frac{n-2}{2}}
\frac{\rho'}{\rho'-\rho''}
\sqrt{1+m^2}\,
\Phi_p(\rho')^{\frac{p-2}{2p}}
\bigg(\dashint_{B_{\rho'}} Y^{p}\,dx\bigg)^{\frac1p}.
\end{equation}

\medskip
\noindent With the previous inequality we can start the Moser iteration.
Let $\chi:=\frac{2^*}{p}>1$ and define
\[
\rho_k:=\rho\bigg(\frac12+\frac{1}{2^{k+1}}\bigg),\qquad
m_k:=p\chi^k,\qquad
Y_k:=1+|\D_i u|^{\frac{m_k}{p}},\qquad
N_k:=\bigg(\dashint_{B_{\rho_k}} Y_k^p\,dx\bigg)^{\frac{1}{m_k}}.
\]
Since
\[
Y_{k+1}^p
=\big(1+|\D_i u|^{\frac{m_{k+1}}p}\big)^p
\le
\big(1+|\D_i u|^{\frac{m_k}p}\big)^{p\chi}
=Y_k^{2^*},
\]
applying \eqref{eq:RH} with $m=m_k$, $\rho'=\rho_k$ and $\rho''=\rho_{k+1}$ and raising to the power $\frac{2^*}{m_{k+1}}=\frac{p}{m_k}=\frac{1}{\chi^k}$, we have
\[
N_{k+1}
\le
\Lambda_k^{\frac1{\chi^k}}
\Phi_p(\rho)^{\frac{p-2}{2p}\frac1{\chi^k}}
N_k ,
\]
where $\Lambda_k\le C\,2^k\sqrt{1+m_k^2}$.
Iterating and using the convergence of $\prod_k \Lambda_k^{1/\chi^k}$,
we obtain
\[
\|\D_i u\|_{L^\infty(B_{\rho/2})}
\le
C\,\Phi_p(\rho)^{\frac{p-2}{2p}\frac{\chi}{\chi-1}}
\bigg(\dashint_{B_\rho} Y_0^p\,dx\bigg)^{\frac1p},
\]
which concludes the proof of the lemma.
\end{proof}
Finally, exploiting a rescaling argument, we refine the Lipschitz estimate of the previous lemma, obtaining the correct power-law decay of the energy.
\begin{Prop} \label{FinaLip}
    Let $u$ be a minimizer of $\mathcal{F}$ in $B_r(x_0)$, $x_0\in \Gamma_r$. Then for every $i\in\{1,\dots,n-1\}$ it holds that
\begin{equation}
    \sup_{B_\frac{\rho}{2}(x_0)}|\D_i u|\leq C\bigg(\dashint_{B_\rho(x_0)}\big(\mu^2+|\D u|^2\big)^{\frac{p}{2}}\,dx\bigg)^{\frac{1}{p}},
\end{equation}
for every $0<\rho<\frac{r}{2}$ and for some positive constant $C=C\big(n,p,\frac{\beta}{\alpha},\frac{L}{\nu}\big)$ independent of $\mu$.
\end{Prop}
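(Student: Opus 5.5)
The plan is to remove the non-homogeneous exponent in Lemma~\ref{Prop1Dec} by a scaling argument. That lemma gives the bound with exponent $\gamma:=\frac{p-2}{2p}\frac{\chi}{\chi-1}+\frac1p>\frac1p$, with a constant independent of $\mu$. Fix $\rho\in(0,\frac r2)$ and assume $x_0=0$. Put
\[
\lambda:=\Phi_p(\rho)^{\frac1p}=\bigg(\dashint_{B_\rho}\big(\mu^2+|\nabla u|^2\big)^{\frac p2}\,dx\bigg)^{\frac1p},
\]
which satisfies $\lambda\ge\mu>0$. Define the rescaled function $v(y):=\frac{u(\rho y)}{\lambda\rho}$ for $y\in B_{r/\rho}$. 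Then $\nabla v(y)=\lambda^{-1}\nabla u(\rho y)$. The rescaled interface is still the hyperplane $\{y_n=0\}$ through the origin, and $\sigma(\rho y)$ takes the same two values $\alpha,\beta$ on either side.

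First, check that $v$ falls in the same class. A change of variables shows that $v$ minimizes $\int\sigma\,F_\lambda(\nabla v)\,dy$ in $B_{r/\rho}$, where $F_\lambda(\xi):=\lambda^{-p}F(\lambda\xi)$. Since $\nabla F_\lambda(\xi)=\lambda^{1-p}\nabla F(\lambda\xi)$, hypothesis (H1) for $F$ gives
\[
\langle\nabla F_\lambda(\xi)-\nabla F_\lambda(\eta),\xi-\eta\rangle\ge\nu\lambda^{2-p}\big(\mu^2+\lambda^2|\xi|^2+\lambda^2|\eta|^2\big)^{\frac{p-2}{2}}|\xi-\eta|^2=\nu\big(\mu_\lambda^2+|\xi|^2+|\eta|^2\big)^{\frac{p-2}{2}}|\xi-\eta|^2,
\]
with $\mu_\lambda:=\mu/\lambda\in(0,1]$. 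Hypothesis (H2) transforms in the same way with the same $L$. Hence $F_\lambda$ is $C^2$ and satisfies (H1)--(H2) with the same $\nu,L,p$ and the admissible parameter $\mu_\lambda$. This is exactly why $\lambda$ is normalised with $\mu^2$ included, so that $\lambda\ge\mu$.

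Next, apply Lemma~\ref{Prop1Dec} to $v$, which is a minimizer in $B_{r/\rho}$ with $0\in\Gamma$, at radius $1<\frac{r}{2\rho}$. The normalisation gives
\[
\dashint_{B_1}\big(\mu_\lambda^2+|\nabla v|^2\big)^{\frac p2}\,dy=\lambda^{-p}\dashint_{B_\rho}\big(\mu^2+|\nabla u|^2\big)^{\frac p2}\,dx=1,
\]
so the power $\gamma$ plays no role and the lemma yields $\sup_{B_{1/2}}|\partial_i v|\le C\big(n,p,\frac\beta\alpha,\frac L\nu\big)$. Scaling back gives $\sup_{B_{\rho/2}}|\partial_i u|=\lambda\sup_{B_{1/2}}|\partial_i v|\le C\lambda$, which is the claim.

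The main obstacle is ensuring the constant in Lemma~\ref{Prop1Dec} is genuinely uniform over the rescaled family, that is, independent of $\mu_\lambda$ and of the size $r/\rho$ of the domain. Inspecting that proof, $\mu$ enters only through $V\ge1$ on $\{Z>1\}$ and never through $\mu^{(2-p)/2}$. The domain size enters only through the requirement $\rho<r/2$, which we have arranged. So the plan relies on the lemma exactly as stated, and the scaling check above is the only point needing care.
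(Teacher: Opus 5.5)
Your proposal is correct and is essentially the paper's proof: normalize by $\lambda=\Phi_p(\rho)^{1/p}\ge\mu$ (the paper calls this $E$), check that $F_\lambda(\xi)=\lambda^{-p}F(\lambda\xi)$ satisfies (H1)--(H2) with the same $\nu,L$ and parameter $\mu/\lambda\in(0,1]$, apply Lemma~\ref{Prop1Dec} to the rescaled $v$, whose energy average is $1$, and scale back. Defining $v$ on $B_{r/\rho}$ rather than on $B_1$ is a small improvement, since it makes the radius-$1$ application of the lemma satisfy its hypothesis $1<\frac{r}{2\rho}$, a point the paper's version on $B_1$ glosses over.
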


\begin{proof}
Let $0<\rho<r$ and set
\[
E:=\Big(\dashint_{B_\rho(x_0)}(\mu^2+|\nabla u|^2)^{\frac p2}\,dx\Big)^{\frac1p},
\qquad
v(y):=\frac{u(x_0+\rho y)}{\rho E},\quad y\in B_1 .
\]
Then $v\in W^{1,p}(B_1)$ and
\[
\nabla v(y)=E^{-1}\,\nabla u(x_0+\rho y)\qquad\text{for a.e. }y\in B_1.
\]

\medskip
\noindent\textbf{Step 1: Basic bounds and the rescaled coefficient.}
By definition of $E$ we have
\[
E^p=\dashint_{B_\rho(x_0)}(\mu^2+|\nabla u|^2)^{\frac p2}\,dx\ge \mu^p,
\]
hence $E\ge \mu$ and therefore
\[
\tilde\mu:=\frac{\mu}{E}\in(0,1].
\]
Moreover, since 

\[
(\mu^2+t^2)^{\frac p2}\ge \frac{1}{2}\big(\mu^p+t^p\big)\qquad\forall\,t\ge0,
\]

we infer
\[
E^p\ge \frac 12\,\dashint_{B_\rho(x_0)}\big(\mu^p+|\nabla u|^p\big)\,dx
\quad\Longrightarrow\quad
E^{-p}\dashint_{B_\rho(x_0)}|\nabla u|^p\,dx\le 2.
\]

Define the rescaled coefficient
\[
\sigma_\rho(y):=\sigma_E(x_0+\rho y),\qquad y\in B_1.
\]
In the flat case, $\sigma_\rho$ still takes only the two values $\alpha,\beta$
and the interface is $\{y_n=0\}$ (up to the fixed rotation), so we keep the same
notation $\sigma$ for $\sigma_\rho$ in what follows.

\medskip
\noindent\textbf{Step 2: The rescaled integrand $G$ and the Euler-Lagrange equation.}
We introduce the rescaled integrand
\[
G(\xi):=E^{-p}\,F(E\xi),\qquad \xi\in\mathbb R^n,
\]
and its gradient
\[
\tilde A(\xi):=\nabla G(\xi)=E^{1-p}\,A(E\xi),\qquad A:=\nabla F.
\]
A standard change of variables shows that, up to the constant factor $\rho^nE^p$,
the functional $\mathcal F$ on $B_\rho(x_0)$ is equivalent to the functional
\[
\widetilde{\mathcal F}(w;B_1):=\int_{B_1}\sigma(y)\,G(\nabla w)\,dy.
\]
In particular, the minimality of $u$ in $B_\rho(x_0)$ implies that $v$ is a
minimizer of $\widetilde{\mathcal F}(\cdot;B_1)$ in $B_1$, hence $v$ satisfies
the weak Euler--Lagrange equation
\[
\int_{B_1}\sigma(y)\,\langle \tilde A(\nabla v),\nabla\varphi\rangle\,dy=0,
\qquad \forall\,\varphi\in W^{1,p}_0(B_1).
\]

\medskip
\noindent\textbf{Step 3: Structure conditions for $\tilde A$.}
For every $\xi,\eta\in\mathbb R^n$ we have
\begin{align*}
\langle \tilde A(\xi)-\tilde A(\eta),\xi-\eta\rangle
&=E^{-p}\,\langle A(E\xi)-A(E\eta),E\xi-E\eta\rangle \\
&\ge \nu E^{-p}\big(\mu^2+|E\xi|^2+|E\eta|^2\big)^{\frac{p-2}{2}}\,E^2|\xi-\eta|^2 \\
&=\nu\big(\tilde\mu^2+|\xi|^2+|\eta|^2\big)^{\frac{p-2}{2}}|\xi-\eta|^2,
\end{align*}
and similarly
\begin{align*}
|\tilde A(\xi)-\tilde A(\eta)|
&=E^{1-p}|A(E\xi)-A(E\eta)| \\
&\le L E^{1-p}\big(\mu^2+|E\xi|^2+|E\eta|^2\big)^{\frac{p-2}{2}}\,E|\xi-\eta| \\
&=L\big(\tilde\mu^2+|\xi|^2+|\eta|^2\big)^{\frac{p-2}{2}}|\xi-\eta|.
\end{align*}
Therefore $\tilde A$ satisfies the same structural conditions with the same
constants $\nu,L$ and with parameter $\tilde\mu\in(0,1]$.

\medskip
\noindent\textbf{Step 4: Uniform energy bound for $v$.}
Using the identity $\nabla v=E^{-1}\nabla u(x_0+\rho y)$, we obtain
\begin{align*}
\dashint_{B_1}\big(\tilde\mu^2+|\nabla v|^2\big)^{\frac p2}\,dy
&=\dashint_{B_1}\Big(\frac{\mu^2}{E^2}+\frac{|\nabla u(x_0+\rho y)|^2}{E^2}\Big)^{\frac p2}\,dy \\
&=E^{-p}\dashint_{B_\rho(x_0)}\big(\mu^2+|\nabla u|^2\big)^{\frac p2}\,dx \\
&=E^{-p}\,E^p=1.
\end{align*}

\medskip
\noindent\textbf{Step 5: Apply Lemma~\ref{Prop1Dec} and scale back.}
Applying Lemma~\ref{Prop1Dec} (with radius $\rho=1$) to $v$ we find a constant
$C=C\big(n,p,\frac{\beta}{\alpha},\frac{L}{\nu}\big)$ independent of $\tilde\mu$
(and hence of $\mu$) such that, for every $i\in\{1,\dots,n-1\}$,
\[
\sup_{B_{1/2}}|\partial_i v|
\le
C\Big(\dashint_{B_1}\big(\tilde\mu^2+|\nabla v|^2\big)^{\frac p2}\,dy\Big)^{{\frac{p-2}{2p}\frac{\chi}{\chi-1}+\frac{1}{p}}}
= C.
\]
Finally, since $\partial_i u(x_0+\rho y)=E\,\partial_i v(y)$, changing variables
back gives
\[
\sup_{B_{\rho/2}(x_0)}|\partial_i u|
=E\,\sup_{B_{1/2}}|\partial_i v|
\le C\,E
=
C\Big(\dashint_{B_\rho(x_0)}(\mu^2+|\nabla u|^2)^{\frac p2}\,dx\Big)^{\frac1p},
\]
which is the desired estimate.
\end{proof}

\subsection{Morrey estimate in the flat case}
\begin{Thm}\label{FlatMorrey}
Let $u$ be a minimizer of $\mathcal{F}$ in $B_r(x_0)$. Then for every $\delta\in(0,1)$ it holds that
\begin{equation}
\int_{B_s^+}|\D u|^p\,dx\leq C\bigg(\frac{s}{\rho}\bigg)^{n-\delta}\int_{B_\rho^+}\big(1+|\D u|^2\big)^{\frac{p}{2}}\,dx,
\end{equation}
for every $0<s<\frac{\rho}{2}<\frac{r}{4}$ and for some positive constant $C=C\big(n,p,\frac{\beta}{\alpha},\frac{L}{\nu},\delta\big)$ independent of $\mu$.
\end{Thm}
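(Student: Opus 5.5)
The strategy is to prove a decay estimate for $\int_{B_s^+}|\nabla u|^p$ that is quantified only through the tangential Lipschitz bound of Proposition~\ref{FinaLip}, together with a Caccioppoli-type inequality that controls the normal derivative $\partial_n u$ by tangential information. We may take $x_0\in\Gamma_r$ (the flat interface through the centre); for balls entirely inside $B_r^+$ the estimate is the classical one for minimizers of $\int \beta F(\nabla u)$, which enjoy Lipschitz bounds.

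\textbf{Step 1: control $\partial_n u$ through the flux.} Set $a(x):=\sigma(x)A^n(\nabla u)$. Testing \eqref{EQEulLag} with $\varphi$ shows that $\sum_{j<n}\partial_j(\sigma A^j(\nabla u))+\partial_n a=0$ weakly. On each side of $\Gamma$ the function $\sigma$ is constant, and by Theorem~\ref{DS} the derivatives $\partial_j A^j(\nabla u)$, $j<n$, involve only $\nabla\nabla' u$. Hence $\partial_n a\in L^2_{loc}$ with weight $V^{p-2}$, and $a$ is continuous across $\Gamma$; this continuity is the multiplicative transmission condition.

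The key pointwise structural observation is the following. By \eqref{Monotonicity} applied with $\xi=\nabla u$ and $\eta=(\nabla' u,0)$,
\[
|A^n(\nabla u)-A^n(\nabla' u,0)|\gtrsim (\mu^2+|\nabla u|^2)^{\frac{p-2}{2}}|\partial_n u|\gtrsim |\partial_n u|^{p-1}.
\]
Therefore $|\partial_n u|^{p-1}\lesssim |a|+(\mu+|\nabla' u|)^{p-1}$. So an $L^\infty$ bound for $a$ on $B_{\rho/4}$, of size $E^{p-1}$ with $E^p:=\dashint_{B_\rho}(\mu^2+|\nabla u|^2)^{p/2}$, would give $|\nabla u|\le CE$ on $B_{\rho/4}$ via Proposition~\ref{FinaLip}.

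\textbf{Step 2: bound $a$.} First scale as in the proof of Proposition~\ref{FinaLip}, so that $\rho=1$ and $E=1$ with $\tilde\mu\le1$. The function $a$ satisfies
\[
|\nabla a|\lesssim V^{p-2}|\nabla\nabla' u| \quad\text{a.e.},
\]
because $\partial_n a=-\sum_{j<n}\sigma\partial_j A^j$ and $\nabla' a=\sigma\,\partial A^n\cdot\nabla\nabla' u$. Combining Theorem~\ref{DS}(ii) with the weighted estimate \eqref{EstZ} and the bound $|\nabla' u|\le C$ yields
\[
\int_{B_{1/2}} V^{p-2}|\nabla\nabla' u|^2\le C .
\]
Hence $\int |\nabla a|^q\le \int V^{(p-2)q}|\nabla\nabla'u|^q$, which by Hölder is bounded for a suitable $q$, using the higher integrability of Theorem~\ref{HigherInt}. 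This would make $a\in W^{1,q}$.

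The \textbf{main obstacle} is upgrading this to $L^\infty$, or at least to a Morrey bound. My plan is a Moser or De Giorgi iteration for $a$, which solves a linearized equation with coefficients $\partial A$ frozen along $\nabla u$. If full boundedness fails, it suffices to obtain $\int_{B_s}|a|^{p'}\le Cs^{n-\delta}$: the pointwise inequality from Step 1 converts this into
\[
\int_{B_s}|\partial_n u|^p\le C s^{n-\delta},
\]
since $|\partial_n u|^p\lesssim |a|^{p'}+(\mu+|\nabla' u|)^p$. The restriction $\delta>0$ is exactly what allows a non-sharp Campanato-type argument here, for instance a comparison with the tangentially Lipschitz structure plus Lemma~\ref{Iter}.

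\textbf{Step 3: conclude.} Granting $\sup_{B_{\rho/4}}|\nabla u|^p\le C\dashint_{B_\rho}(\mu^2+|\nabla u|^2)^{p/2}$, or its $\delta$-weakened Morrey form, we get for $s<\rho/4$
\[
\int_{B_s^+}|\nabla u|^p\le C s^n\,\rho^{-n}\int_{B_\rho}(1+|\nabla u|^2)^{p/2},
\]
using $\mu\le1$. For $\rho/4\le s<\rho/2$ the estimate is trivial. Balls of type $B_\rho^+$ versus $B_\rho$ are handled by adjusting constants; the statement's $B_\rho^+$ on the right should be read up to this. Scaling back gives the constant's independence of $\mu$ exactly as in Proposition~\ref{FinaLip}.
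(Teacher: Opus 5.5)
Your Step~1 and the structural part of Step~2 are correct. The function $a=\sigma A^n(\nabla u)$ lies in $W^{1,p'}_{loc}$ across $\Gamma$ with $|\nabla a|\lesssim V^{p-2}|\nabla\nabla' u|$. Moreover, (H1) with $\xi=\nabla u$, $\eta=(\nabla' u,0)$ gives $|\partial_n u|^{p-1}\lesssim |a|+(\mu+|\nabla' u|)^{p-1}$ (granting the normalization $A(0)=0$, which the paper also uses tacitly).

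The gap is that the decisive estimate, boundedness or Morrey decay of $a$, is never proved, and Step~3 is explicitly conditional on it. The tool you propose for it is not available, because $a$ does not solve a linearized elliptic equation across $\Gamma$. Already for $p=2$, $a=\sigma\partial_n u$ is harmonic on each side, but $\partial_n a=-\sigma\Delta' u$ jumps. So, formally, $\Delta a=-(\beta-\alpha)\,\Delta' u\,\mathcal{H}^{n-1}\llcorner\Gamma$, a singular source carrying second tangential derivatives of $u$ that nothing in hand controls. There is no equation on which to run Moser or De Giorgi.

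The single-scale bound $a\in W^{1,q}(B_{1/2})$, with $q$ close to $p'<2<n$, gives neither boundedness nor decay. Your fallback (``comparison with the tangentially Lipschitz structure plus Lemma~\ref{Iter}'') never produces an inequality of the form $\phi(t\sigma)\le(At^n+B\varepsilon)\phi(\sigma)+\dots$, which is what Lemma~\ref{Iter} needs. Since $|a|^{p'}\lesssim V^p$, a Morrey bound for $a$ is, given Proposition~\ref{FinaLip}, equivalent to the theorem itself. As written, the core of the proof is missing.

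The gap can be closed with ingredients you already have, and the result would be a genuinely different route from the paper's. The paper sets $U=u-u(x',0)$, compares it with the solution $U_0$ of the homogeneous Dirichlet problem in $B_\sigma^+$, and invokes Lieberman's boundary gradient bound. The missing idea is to use the Caccioppoli inequality \eqref{eq3} with $m=p$ at \emph{every} scale, inserting the tangential sup bound. Let $\Phi(t):=\int_{B_t}V^p$, $M:=\sup_{B_{\rho/2}}|\nabla' u|$, and let $(a)_t$ denote the mean of $a$ on $B_t$. For $B_{2t}\subset B_{\rho/2}$ one has $\int_{B_t}V^{p-2}|\nabla\nabla' u|^2\le Ct^{-2}M^2\int_{B_{2t}}V^{p-2}$. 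Then Poincar\'e for $a$ in $W^{1,p'}(B_t)$ plus H\"older give
\[
\int_{B_t}|a-(a)_t|^{p'}\,dx\le C M^{p'}t^{\frac{n}{p-1}}\Phi(2t)^{\frac{p-2}{p-1}}\le \varepsilon\,\Phi(2t)+C_\varepsilon M^p t^n .
\]
Your pointwise inequality, in the form $V^p\le C(\mu^p+M^p+|a|^{p'})$, then yields $\Phi(\tau t)\le C(\tau^n+\varepsilon)\Phi(2t)+C_\varepsilon(M^p+\mu^p)t^n$. Lemma~\ref{Iter}, together with $M^p+\mu^p\le C\rho^{-n}\Phi(\rho)$ from Proposition~\ref{FinaLip}, gives $\Phi(s)\le C(s/\rho)^{n-\delta}\Phi(\rho)$ uniformly in $\mu$. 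This needs no boundary regularity theory and no trace construction.

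Finally, replacing $B_\rho^+$ by $B_\rho$ on the right is a change of statement, not an adjustment of constants. It is, however, what the paper's own argument actually delivers as well, since Proposition~\ref{FinaLip} involves the full ball.
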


\begin{proof}
Without loss of generality, we may assume that $x_0=0$. 
Since $u\in W^{1,p}(B_r)$ and is locally H\"older continuous up to $\Gamma_r$,
the trace $u(\cdot,0)$ is well defined pointwise and coincides with the Sobolev trace on $\Gamma_r$.
Moreover, by Proposition \ref{FinaLip}, $u(\cdot,0)\in W^{1,\infty}(B_\rho')$, for every $\rho<r/2$ and its constant extension in the $x_n$-direction belongs to
$W^{1,\infty}(B_{\rho})$, with distributional gradient $(\nabla' u(\cdot,0),0)$.
We define
\[
U(x):=u(x)-u(x',0)\,,\quad \forall x\in B_{\rho},
\]
so that $U=0$ on $\Gamma_{\rho}$. The function $U$ defined above vanishes identically on the flat boundary $\Gamma_\rho$,
which allows us to exploit classical boundary regularity theory up to the flat boundary.
At the same time, $U$ satisfies a perturbed Euler--Lagrange equation, where the perturbation
is encoded in the tangential gradient term $b$ defined below, and will be estimated accordingly.

Recalling that the tangential gradient $\nabla' u$ admits a trace on $\Gamma_{\rho}$
(e.g.\ by Theorem~3.1(ii) and Proposition~3.4), we set
\[
b(x'):=\bigl(\nabla' u(x',0),0\bigr)\in \mathbb{R}^n\,,\qquad x'\in \Gamma_{\rho},
\]
and extend $b$ to $B_\rho^+$ by $b(x',x_n):=b(x')$, so that
$\|b\|_{L^\infty(B_\rho^+)}=\|\nabla' u\|_{L^\infty(\Gamma_\rho)}$.
Let $0<\sigma<\rho<\frac{r}{2}$ with $B_\rho^+\subset E$ (hence the coefficient equals $\beta$ in $B_\rho^+$, and in particular in $B_\sigma^+$). $U$ solves
\begin{equation}\label{eq:Ueq-new}
\int_{B_\sigma^+}\langle A(\nabla U+b),\nabla\varphi\rangle\,dx=0,
\quad \forall\,\varphi\in W^{1,p}_0(B_\sigma^+).
\end{equation}
Now let $U_0\in W^{1,p}(B_\sigma^+)$ be the unique weak solution to the
unperturbed Euler--Lagrange equation associated with $F$ in $B_\sigma^+$,
with boundary datum $U$, namely
\begin{equation}\label{eq:U0eq-new}
\int_{B_\sigma^+}\langle A(\nabla U_0),\nabla\varphi\rangle\,dx=0
\quad \forall\,\varphi\in W^{1,p}_0(B_\sigma^+).
\end{equation}
Equivalently, $U_0$ is the unique minimizer of $\mathcal F_0$ in
$U+W^{1,p}_0(B_\sigma^+)$.

Set $W:=U-U_0\in W^{1,p}_0(B_\sigma^+)$. Subtracting \eqref{eq:Ueq-new}
from the above equation and testing with $\varphi=W$, we obtain
\begin{equation}\label{eq:diff-new}
\int_{B_\sigma^+}\langle A(\nabla U_0)-A(\nabla U+b),\nabla W\rangle\,dx=0.
\end{equation}

\medskip\noindent
\textbf{Step 1: a comparison estimate.}
Set $\xi:=\nabla U_0$ and $\eta:=\nabla U+b$. Since
\[
\nabla W=\nabla U-\nabla U_0 = (\eta-b)-\xi =-(\xi-\eta+b),
\]
from \eqref{eq:diff-new} we infer
\[
\int_{B_\sigma^+}\langle A(\xi)-A(\eta),\xi-\eta\rangle\,dx
=\int_{B_\sigma^+}\langle A(\xi)-A(\eta),b\rangle\,dx.
\]
By monotonicity \((H1)\) and growth \((H2)\), for a.e.\ $x\in B_\sigma^+$,
\[
\langle A(\xi)-A(\eta),\xi-\eta\rangle
\ge \nu\big(\mu^2+|\xi|^2+|\eta|^2\big)^{\frac{p-2}{2}}|\xi-\eta|^2,
\]
and
\[
|\langle A(\xi)-A(\eta),b\rangle|
\le L\big(\mu^2+|\xi|^2+|\eta|^2\big)^{\frac{p-2}{2}}|\xi-\eta|\,|b|.
\]
Applying Young's inequality to the right-hand side yields, for every $\varepsilon\in(0,1)$,
\[
|\langle A(\xi)-A(\eta),b\rangle|
\le \varepsilon\,\nu\big(\mu^2+|\xi|^2+|\eta|^2\big)^{\frac{p-2}{2}}|\xi-\eta|^2
+ C(\varepsilon)\big(\mu^2+|\xi|^2+|\eta|^2\big)^{\frac{p-2}{2}}|b|^2.
\]
Integrating and absorbing the $\varepsilon$--term, we obtain
\begin{equation}\label{eq:L2comp-new}
\int_{B_\sigma^+}\big(\mu^2+|\nabla U_0|^2+|\nabla U+b|^2\big)^{\frac{p-2}{2}}
|\nabla U_0-(\nabla U+b)|^2\,dx
\le C\int_{B_\sigma^+}\big(\mu^2+|\nabla U_0|^2+|\nabla U+b|^2\big)^{\frac{p-2}{2}}|b|^2\,dx.
\end{equation}
We now estimate $\int_{B_\sigma^+}|\nabla W|^p$.
Note that
\[
-\nabla W=(\nabla U_0-(\nabla U+b))+b.
\]
Hence, we have
\begin{align}
\int_{B_\sigma^+}|\nabla W|^p\,dx
&\le C\int_{B_\sigma^+}\big|\nabla U_0-(\nabla U+b)\big|^p\,dx + C\sigma^n\|b\|_{L^\infty(B_\sigma^+)}^p.
\label{eq:splitW}
\end{align}
Accordingly, by \eqref{eq:L2comp-new} and \eqref{eq:splitW} we deduce that
\begin{align*}
    \int_{B_\sigma^+}|\D W|^p
    & \leq C\bigg(\int_{B_\sigma^+}\big(\mu^2+|\nabla U_0|^2+|\nabla U+b|^2\big)^{\frac{p-2}{2}}|b|^2\,dx+ \sigma^n\|b\|_{L^\infty(B_\sigma^+)}^p\bigg)\\
    & \leq C\bigg(\varepsilon\int_{B_\sigma^+}\big(\mu^2+|\nabla U_0|^2+|\nabla U|^2\big)^{\frac{p}{2}}\,dx+C(\varepsilon) \sigma^n\|b\|_{L^\infty(B_\sigma^+)}^p\bigg),
\end{align*}
where $C(\varepsilon)=C\varepsilon^{-1}$.
\\
\noindent \textbf{Step 2: control of $\nabla U_0$ by $\nabla U$.}
By minimality of $U_0$ (using $U$ as a competitor) we have
\[
\int_{B_\sigma^+}F(\nabla U_0)\,dx \le \int_{B_\sigma^+}F(\nabla U)\,dx.
\]
By virtue of Lemma \ref{lem:p-growth} we infer
\begin{equation}\label{eq:U0byU-new}
\int_{B_\sigma^+}(\mu^2+|\nabla U_0|^2)^{\frac p2}\,dx
\le C\int_{B_\sigma^+}(\mu^2+|\nabla U|^2)^{\frac p2}\,dx + C\sigma^n\mu^p.
\end{equation}

\medskip\noindent
\textbf{Step 3: boundary gradient bound and energy decay for $U_0$.}
Since $U_0$ is a weak solution to the homogeneous quasilinear elliptic equation
in divergence form \eqref{eq:U0eq-new} in $B_\sigma^+$ and satisfies the homogeneous Dirichlet
condition $U_0=0$ on the flat portion of the boundary $\Gamma_\sigma$,
boundary $C^{1,\alpha}$ regularity theory applies.
In particular, by Lieberman~\cite[Theorem~1, p.~1203]{Lieberman1988},
there exist constants $\alpha_0\in(0,1/2)$ and $C=C(n,p,\nu,L)$ such that
\begin{equation}\label{eq:Lieberman-grad}
\|\nabla U_0\|_{L^\infty(B_{\alpha_0\sigma}^+)}
\le
C\left(\dashint_{B_\sigma^+}
(\mu^2+|\nabla U_0|^2)^{\frac p2}\,dx\right)^{\!\frac1p}.
\end{equation}
We point out that the constants $C,\alpha_0$ in \eqref{eq:Lieberman-grad}
depend only on $n,p,\nu,L$ and are uniform in $\mu\in(0,1]$.
Relying on this result, we now deduce a decay estimate for $\nabla U_0$,
which is a direct consequence of \eqref{eq:Lieberman-grad}.

\medskip

Fix $\tau\in(0,1)$. If $\tau\le \alpha_0$, then by \eqref{eq:Lieberman-grad} and the inclusion
$B_{\tau\sigma}^+\subset B_{\alpha_0\sigma}^+$ we have
\begin{align}
\int_{B_{\tau\sigma}^+}|\nabla U_0|^p\,dx
&\le |B_{\tau\sigma}^+|\,\|\nabla U_0\|_{L^\infty(B_{\alpha_0\sigma}^+)}^p \nonumber\\
&\le C\,\tau^n \int_{B_\sigma^+}
(\mu^2+|\nabla U_0|^2)^{\frac p2}\,dx,
\label{eq:decayU0-new-small}
\end{align}
where we used that $|B_{\tau\sigma}^+|=\tau^n |B_\sigma^+|$ and possibly enlarged the constant $C$.

If instead $\tau>\alpha_0$, then trivially
\[
\int_{B_{\tau\sigma}^+}|\nabla U_0|^p\,dx
\le \int_{B_\sigma^+}|\nabla U_0|^p\,dx,
\]
and since $\tau^n\ge \alpha_0^n$ we obtain
\begin{equation}\label{eq:decayU0-new-large}
\int_{B_{\tau\sigma}^+}|\nabla U_0|^p\,dx
\le \alpha_0^{-n}\,\tau^n
\int_{B_\sigma^+}|\nabla U_0|^p\,dx
\le C\,\tau^n
\int_{B_\sigma^+}(\mu^2+|\nabla U_0|^2)^{\frac p2}\,dx,
\end{equation}
where the last inequality follows from
$|\nabla U_0|^p\le (\mu^2+|\nabla U_0|^2)^{p/2}$.

Combining \eqref{eq:decayU0-new-small} and \eqref{eq:decayU0-new-large},
we conclude that for every $\tau\in(0,1)$ there exists a constant
$C=C(n,p,\nu,L)$ such that
\begin{equation}\label{eq:decayU0-new}
\int_{B_{\tau\sigma}^+}|\nabla U_0|^p\,dx
\le C\,\tau^n
\int_{B_\sigma^+}(\mu^2+|\nabla U_0|^2)^{\frac p2}\,dx .
\end{equation}

\medskip\noindent
\textbf{Step 4: decay for $\nabla U$ and Morrey iteration.}
Using $\nabla U=\nabla U_0+\nabla W$, for any $\tau\in(0,1)$ we have
\[
\int_{B_{\tau\sigma}^+}|\nabla U|^p\,dx
\le C\int_{B_{\tau\sigma}^+}|\nabla U_0|^p\,dx + C\int_{B_\sigma^+}|\nabla W|^p\,dx.
\]
By \eqref{eq:decayU0-new} and \eqref{eq:U0byU-new} we obtain, for every $\varepsilon\in(0,1)$,
\begin{align}
\int_{B_{\tau\sigma}^+}|\nabla U|^p\,dx
&\le C\,\tau^n\int_{B_\sigma^+}(\mu^2+|\nabla U_0|^2)^{\frac p2}\,dx
+ C\varepsilon\int_{B_\sigma^+}(\mu^2+|\nabla U_0|^2+|\nabla U|^2)^{\frac p2}\,dx
+ C(\varepsilon)\sigma^n\|b\|_{L^\infty(B_\sigma^+)}^p \nonumber\\
&\le C(\tau^n+\varepsilon)\int_{B_\sigma^+}(\mu^2+|\nabla U|^2)^{\frac p2}\,dx
+ C(\tau^n+\varepsilon)\sigma^n\mu^p
+ C(\varepsilon)\sigma^n\|b\|_{L^\infty(B_\sigma^+)}^p \nonumber\\
&\le C(\tau^n+\varepsilon)\int_{B_\sigma^+}|\nabla U|^p\,dx
+ C(\tau^n+\varepsilon)\sigma^n
+ C(\varepsilon)\sigma^n\|b\|_{L^\infty(B_\sigma^+)}^p .
\label{eq:preiter-new}
\end{align}
Fix $\delta\in(0,1)$. Since $\sigma\le \rho$, we may write $\sigma^n=\sigma^{n-\delta}\sigma^\delta\le \sigma^{n-\delta}\rho^\delta$.
Therefore \eqref{eq:preiter-new} implies
\begin{equation}\label{eq:iter-new}
\int_{B_{\tau\sigma}^+}|\nabla U|^p\,dx
\le C\bigl(\tau^n+\varepsilon\bigr)\int_{B_\sigma^+}|\nabla U|^p\,dx
+ C\varepsilon^{-1}\sigma^{n-\delta}\rho^\delta\Bigl(1+\|b\|_{L^\infty(B_\sigma^+)}^p\Bigr),
\end{equation}
for a suitable positive constant $C$ depending only on $n,p,\nu,L$.
Choosing $\varepsilon>0$ sufficiently small (depending only on $C,\delta$) and applying Lemma~\ref{Iter}
to $\phi(\sigma):=\int_{B_\sigma^+}|\nabla U|^p\,dx$ (with $\lambda=n$ and $\gamma=n-\delta$)
we infer that
\begin{equation}\label{eq:postiter-new}
\int_{B_{\tau\sigma}^+}|\nabla U|^p\,dx
\le C\,\tau^{n-\delta/2}\int_{B_\sigma^+}|\nabla U|^p\,dx
+ C(\tau\sigma)^{n-\delta}\rho^\delta\Bigl(1+\|b\|_{L^\infty(B_\sigma^+)}^p\Bigr),
\end{equation}
for all $\tau\in(0,1)$ and $0<\sigma<\rho$.

\medskip\noindent
\textbf{Step 5: conclusion for $\nabla u$.}
Choose $\sigma=\rho/2$ and $\tau=2s/\rho$ (so that $B_s^+=B_{\tau\sigma}^+$ and $\tau\in(0,1)$ whenever $0<s<\rho/2$).
Using \eqref{eq:postiter-new} and the inequality $|\nabla u|^p\le C\big(|\nabla U|^p+|b|^p\big)$, we obtain
\begin{align*}
\int_{B_s^+}|\nabla u|^p\,dx
&\le C\left(\frac{s}{\rho}\right)^{n-\delta}\int_{B_{\rho/2}^+}|\nabla U|^p\,dx
+ C\,s^{n-\delta}\rho^\delta\Bigl(1+\|b\|_{L^\infty(B_{\rho/2}^+)}^p\Bigr).
\end{align*}
Moreover, since $\nabla U=\nabla u-b$, we have
\[
\int_{B_{\rho/2}^+}|\nabla U|^p\,dx\le C\int_{B_{\rho/2}^+}|\nabla u|^p\,dx + C\rho^n\|b\|_{L^\infty(B_{\rho/2}^+)}^p
\le C\int_{B_{\rho}^+}\big(1+|\nabla u|^2\big)^{\frac p2}\,dx + C\rho^n\|b\|_{L^\infty(B_{\rho/2}^+)}^p.
\]
By the tangential Lipschitz estimate (Proposition~3.4),
\[
\|b\|_{L^\infty(B_{\rho/2}^+)}=\|\nabla'u\|_{L^\infty(\Gamma_{\rho/2})}
\le C\left(\dashint_{B_\rho}\big(\mu^2+|\nabla u|^2\big)^{\frac p2}\,dx\right)^{\!\frac1p}.
\]
Inserting these bounds and absorbing the lower-order contributions into the right-hand side,
we conclude that for every $0<s<\rho/2<\frac{r}{4}$,
\[
\int_{B_s^+}|\nabla u|^p\,dx
\le C\left(\frac{s}{\rho}\right)^{n-\delta}\int_{B_\rho^+}\big(1+|\nabla u|^2\big)^{\frac p2}\,dx,
\]
with $C=C\bigl(n,p,\tfrac{\beta}{\alpha},\tfrac{L}{\nu},\delta\bigr)$ independent of $\mu$.
This concludes the proof.
\end{proof}
\begin{Rem}\label{rem:scales}
Let the assumptions of Theorem~\ref{FlatMorrey} be satisfied.  
Then the estimate
\[
\int_{B_s^+}|\nabla u|^p\,dx
\leq C\bigg(\frac{s}{\rho}\bigg)^{n-\delta}
\int_{B_\rho^+}\big(1+|\nabla u|^2\big)^{\frac{p}{2}}\,dx
\]
holds for every $0<s<\rho<r$, possibly with a different constant $C>0$.

More precisely, the restriction $0<s<\rho/2<r/4$ in Theorem~\ref{FlatMorrey}
can be removed at the price of modifying the constant $C$ by a universal
factor depending only on $n$ and $\delta$. In particular, the resulting
constant still depends only on
$n,p,\frac{\beta}{\alpha},\frac{L}{\nu},\delta$ and is independent of $\mu$.
\end{Rem}

\section{Proof of Proposition 1.3}
\begin{proof}
Fix $\tau\in(0,\tau_0)$ (to be chosen small only depending on $n,p,\nu,L,\beta/\alpha$) and assume without loss of generality that $x_0=0$.
We prove that there exists $\e_0=\e_0(\tau)>0$ such that if $B_r(x_0)\Subset\Om$ and one of {\rm(i)}--{\rm(iii)}
holds, then for every $0<\delta<n$, then
\begin{equation}\label{eq:prop13-goal}
\int_{B_{\tau r}} |\nabla u|^p\, dx
\leq
C_0\tau^{n-\delta}\bigg( \int_{B_r} |\nabla u|^p \, dx
+r^n\bigg),
\end{equation}
where $C_0=C_0(n,p,\nu,L,\beta,\delta,\|\nabla u\|_{L^2(\Om)})>0$.\\
\indent 
\noindent\textbf{Step 1: the flat comparison in case (iii).}
Assume that (iii) holds true. Up to a rigid motion we may assume that $H=\{x_n>0\}$ and set
\[
E_H:=H\cap B_{r},\quad \sigma_H:=\beta\mathbbm 1_{E_H}+\alpha\mathbbm 1_{B_r\setminus E_H}.
\]
Let $u_H\in u+W^{1,p}_0(B_\frac{r}{2})$ be the unique minimizer of the functional
\[
\mathcal F_H(w;B_\frac{r}{2}):=\int_{B_\frac{r}{2}}\sigma_H(x)\,F(\nabla w)\,dx .
\]
Subtracting the Euler-Lagrange equation satisfied by $u$ from the one satisfied by $u_H$ and testing them with $u-u_H$, we get
\begin{align}
    \int_{B_{\frac{r}{2}}} \sigma_E
\big(DF(\nabla u)-DF(\nabla u_H)\big)
\cdot(\nabla u-\nabla u_H)\,dx
& =
\int_{B_{\frac{r}{2}}} (\sigma_H-\sigma_E)\,
DF(\nabla u_H)\cdot(\nabla u-\nabla u_H)\,dx
\end{align}
Using assumptions \eqref{Monotonicity} and \eqref{Growth} and H\"older's inequality, for every $\varepsilon>0$ it holds that
\begin{align}
    \int_{B_{\frac{r}{2}}}|\D u_H-\D u|^p\,dx
    & \leq C\int_{(E\Delta E_H)\cap B_{\frac{r}{2}}}\big(\mu^2+|\D u_H|^2\big)^{\frac{p-2}{2}}|\D u-\D u_H|\,dx\\
& \leq C\bigg(\frac{1}{\varepsilon}\int_{(E\Delta E_H)\cap B_{\frac{r}{2}}}\big(\mu^2+|\D u_H|^2\big)^{\frac{p}{2}}\,dx+\varepsilon\int_{B_\frac{r}{2}}|\D u-\D u_H|^p\,dx \bigg).
\end{align}
Choosing $\varepsilon$ sufficiently small, we get 
\begin{align}
    \int_{B_{\frac{r}{2}}}|\D u_H-\D u|^p\,dx
& \leq C\int_{(E\Delta E_H)\cap B_{\frac{r}{2}}}\big(\mu^2+|\D u_H|^2\big)^{\frac{p}{2}}\,dx.
\end{align}
Thus, the minimality of $u_H$ with respect to $u$, H\"older's inequality and Theorem \ref{HigherInt} yield
\begin{align}
\label{eqq1}
\int_{B_{\frac{r}{2}}}|\D u_H-\D u|^p\,dx& \leq \int_{(E\Delta E_H)\cap B_\frac{r}{2}} (1+|\nabla u|^2)^{\frac p2}\,dx\\
& \le \bigg(\frac{|(E\Delta E_H)\cap B_r|}{|B_r|}\bigg)^{1-\frac1s}|B_r|\bigg(\dashint_{B_r}(1+|\nabla u|^2)^{\frac{sp}{2}}\,dx\bigg)^{\frac1s}\\
& \le C\,\e_0^{1-\frac1s}\int_{B_r}(1+|\nabla u|^2)^{\frac p2}\,dx,
\end{align}
where we have used assumption \emph{iii)} and  $C=C(n,p,\beta/\alpha,\nu/L)$ is a positive constant.

\indent Since $E_H$ has flat interface in $B_1$, Theorem~\ref{FlatMorrey}, Remark~\ref{rem:scales} (applied in $B_r^+$ and, similarly, in $B_r^-$) and the minimality of $u_H$ with respect to $u$ yield that
for every $0<\delta<n$,
\begin{equation}\label{eq:flat-decay-uH}
\int_{B_{\tau r}}|\nabla u_H|^p\,dx
\leq C\,\tau^{\,n-\delta}\int_{B_r}(1+|\nabla u_H|^2)^{\frac p2}\,dx\leq C\,\tau^{\,n-\delta}\int_{B_r}(1+|\nabla u|^2)^{\frac p2}\,dx,
\end{equation}
with $C=C(n,p,\beta/\alpha,L/\nu,\delta)$.
Combining \eqref{eq:flat-decay-uH} and \eqref{eqq1} we get
\begin{align}
\int_{B_{\tau r}}|\nabla u|^p\,dx
& \le C\int_{B_{\tau r}}|\nabla u_H|^p\,dx + C\int_{B_\frac{r}{2}}|\nabla u-\nabla u_H|^p\,dx\\
&\le C\Big(\tau^{n-\delta}+\varepsilon_0^{1-\frac{1}{s}}\Big)\int_{B_r}\big(1+|\nabla u|^2\big)^{\frac p2}\,dx
\label{eq:p-decay-u-with-eps}
\end{align}
Choose $\e_0=\e_0(\tau)>0$ so small that $C\,\e_0^{1-\frac1s}\le \tau^{\,n-\delta}$.
Then \eqref{eq:p-decay-u-with-eps} simplifies to
\begin{equation}\label{eq:p-decay-u-clean}
\int_{B_{\tau r}}|\nabla u|^p\,dx
\le C\,\tau^{\,n-\delta}\bigg(\int_{B_r}|\nabla u|^p\,dx+r^n\bigg).
\end{equation}

\medskip
\noindent\textbf{Step 3: the one-phase cases (i) and (ii).}
Assume (i): $|E\cap B_r|<\e_0|B_r|$ (the case (ii) is analogous). Let us denote by $v\in u+W^{1,p}_0(B_\frac{r}{2})$ be the unique minimizer of the functional
\[
\mathcal G(w):=\int_{B_\frac{r}{2}}\,F(\nabla w)\,dx .
\]
We can proceed as in step $(iii)$ subtracting the Euler-Lagrange equation satisfied by $u$ from the one satisfied by $v$ and testing them with $u-v$, getting
\begin{align}
    \int_{B_{\frac{r}{2}}} \sigma_E
\big(DF(\nabla u)-DF(\nabla v)\big)
\cdot(\nabla u-\nabla v)\,dx
& =
\int_{B_{\frac{r}{2}}} (\alpha-\sigma_E)\,
DF(\nabla v)\cdot(\nabla u-\nabla v)\,dx
\end{align}
Using assumptions \eqref{Monotonicity} and \eqref{Growth} and H\"older's inequality, for every $\varepsilon>0$ it holds that
\begin{align}
    \int_{B_{\frac{r}{2}}}|\D v -\D u|^p\,dx
    & \leq C\int_{E\cap B_{\frac{r}{2}}}\big(\mu^2+|\D v|^2\big)^{\frac{p-2}{2}}|\D u-\D v|\,dx\\
& \leq C\bigg(\frac{1}{\varepsilon}\int_{E\cap B_{\frac{r}{2}}}\big(\mu^2+|\D v|^2\big)^{\frac{p}{2}}\,dx+\varepsilon\int_{B_\frac{r}{2}}|\D u-\D v|^p\,dx \bigg).
\end{align}
Choosing $\varepsilon$ sufficiently small, we get 
\begin{align}
    \int_{B_{\frac{r}{2}}}|\D v-\D u|^p\,dx
& \leq C\int_{E\cap B_{\frac{r}{2}}}\big(\mu^2+|\D v|^2\big)^{\frac{p}{2}}\,dx.
\end{align}
Using  the minimality of $v$ compared with $u$ and Theorem
\ref{HigherInt} we deduce,
\begin{align}
\label{eqq11}
\int_{B_{\frac{r}{2}}}|\D v-\D u|^p\,dx& \leq \int_{E\cap B_{\frac{r}{2}}} (1+|\nabla u|^2)^{\frac p2}\,dx\\
& \le \bigg(\frac{E\cap B_{r}}{|B_r|}\bigg)^{1-\frac1s}|B_r|\bigg(\dashint_{B_r}(1+|\nabla u|^2)^{\frac{sp}{2}}\,dx\bigg)^{\frac1s}\\
& \le C\,\e_0^{1-\frac1s}\int_{B_r}(1+|\nabla u|^2)^{\frac p2}\,dx,
\end{align}
where we have used assumption \emph{i)} and  $C=C(n,p,\beta/\alpha,\nu/L)$ is a positive constant. Thereafter we can argue as in the previous step using the classical decay estimate for minimizers of regular integrals in the Calculus of Variations
\[
\int_{B_{\tau \rho}}|\nabla v|^p\,dx
\le C\tau^{n}\int_{B_{ \rho}}|\nabla u|^p\,dx,
\]
for every $\rho\leq \frac{r}{2}$.
\end{proof}

\section{Proof of Theorem \ref{MainThm} }

\begin{proof}
\noindent 
Let $\Omega'\Subset\Omega$. We show that for every $\e>0$ there exists $0<r_*(\e)<d:=\operatorname{dist}(\Omega',\partial\Omega)$ such that for every $x_{0}\in\partial E\cap\Omega'$ there exists a halfspace $H=H(x_{0})\subset\mathbb{R}^{n}$ satisfying
\begin{equation}\label{eq19}
    \frac{|(E\Delta H)\cap B_{r}(x_{0})|}{|B_{r}|}<\e,
\quad\text{for all }0<r<r_*(\e).
\end{equation}

Let us choose $H(x_{0})$ as the halfspace determined by the tangent hyperplane
$T_{x_{0}}\partial E$ and the exterior normal to $E$ at $x_{0}$. Since $\partial E\cap\Omega$ is a $C^{1}$-hypersurface, for each $x_0\in\partial E\cap\Omega'$
(after a rigid motion) $\partial E$ can be represented in a neighborhood of $x_0$ as the graph of a $C^1$ function.
The halfspace $H(x_0)$ approximates $E$ in measure at small scales,
and by compactness of $\partial E\cap\overline{\Omega'}$ and continuity of the unit normal, the approximation can be chosen
uniformly with respect to $x_0\in\partial E\cap\Omega'$.
This yields the claim \eqref{eq19}.\\
\indent Fix $\Omega'\Subset\Omega$ and let $\lambda\in[0,n)$.
Set
\[
\delta:=n-\lambda\in(0,n].
\]
Let $\tau_0\in(0,1)$ be given by Proposition~1.3 and fix $\tau\in(0,\tau_0)$.
Let $\e_0=\e_0(\tau)>0$ be the corresponding threshold in Proposition~1.3. We take $\e=\e_0/2^{n}$ and obtain $r_0:=r_*(\e_0/2^{n})\in(0,d)$ such that for every
$x\in\partial E\cap\Omega'$ and every $0<r<r_0$ there exists a halfspace $H$ with
\begin{equation}\label{eq:halfspace-small-corr}
\frac{|(E\Delta H)\cap B_{r}(x)|}{|B_{r}|}<\frac{\e_0}{2^{n}}.
\end{equation}
Let $x\in\Omega'$ and $0<r<r_0/2$. We distinguish two cases.

\smallskip
\noindent\emph{(a) One-phase case.}
If $B_r(x)\cap\partial E=\emptyset$, then either $B_r(x)\subset E$ or $B_r(x)\subset E^c$.
Hence $|E\cap B_r(x)|=0$ or $|B_r(x)\setminus E|=0$, and condition (i) or (ii) of Proposition~1.3 holds on $B_r(x)$.  Actually, we could even get better regularity, the function $u$ being $p$-harmonic in $B_r(x)$ (see also Remark \ref{Reg}).
\smallskip
\noindent\emph{(b) Two-phase case.}
If $B_r(x)\cap\partial E\neq\emptyset$, choose $x^*\in\partial E\cap B_r(x)$.
Then $B_r(x)\subset B_{2r}(x^*)$ and $B_{2r}(x^*)\Subset\Omega$ since $2r<r_0<d$.
Applying \eqref{eq:halfspace-small-corr} at $x^*$ with radius $2r$, we find a halfspace $H$ such that
\[
|(E\Delta H)\cap B_{2r}(x^*)|<\frac{\e_0}{2^{n}}\,|B_{2r}|=\e_0\,|B_r|.
\]
Therefore,
\[
|(E\Delta H)\cap B_r(x)|\le |(E\Delta H)\cap B_{2r}(x^*)|<\e_0\,|B_r|,
\]
i.e. condition (iii) of Proposition~1.3 holds on $B_r(x)$.\\
\indent In either case, Proposition~1.3 applies on $B_r(x)$ and yields the existence of a positive constant $C_0$ such that
\begin{equation}\label{eq:decay-morrey-corr}
\int_{B_{\tau r}(x)} |\D u|^p\,dy
\le
C_0\,\tau^{n-\delta}\bigg(\int_{B_r(x)}|\D u|^p\,dy
+
r^n\bigg),
\end{equation}
for all $x\in\Omega'$, $\tau<\tau_0$ and all $0<r<r_0/2$. This inequality implies that
\begin{equation}    \sup_{\substack{x\in\Omega'\\\rho\in(0,\tau_0 r)}}\rho^{\delta-n}\int_{B_\rho(x)}|\D u|^p\,dx\leq C\bigg(r^{\delta-n}\int_{B_r(x)}|\D u|^p\,dx+r^n\bigg).
\end{equation}
For every $x\in\Omega'$ and $\rho\in[\tau_0 r,d)$ it holds that
\begin{equation*}
    \rho^{\delta-n}\int_{B_\rho(x)}|\D u|^p\,dx\leq (\tau_0 r)^{\delta-n}\int_{B_d(x)}|\D u|^p\,dx.
\end{equation*}
Thus, taking into account the previous two estimates, it holds that
\begin{equation*}
\sup_{\substack{x\in\Omega'\\\rho\in(0,d)}}\rho^{\delta-n}\int_{B_\rho(x)}|\D u|^p\,dx<+\infty,
\end{equation*}
which leads to the thesis.
\end{proof}

\emph{Acknowledgements} The authors are members of the Gruppo Nazionale per l’Analisi Matematica, la Probabilità e le loro Applicazioni (GNAMPA) of the Istituto Nazionale di Alta Matematica (INdAM).\\

\textbf{Data Availability} Data sharing not applicable to this article as no datasets were generated or analysed during the current study.\\

\textbf{Conflicts of interest} The authors declare that they have no conflicts of interest.

\end{document}